\newtheorem{Thm}{Theorem}[section]
\newtheorem{Def}[Thm]{Definition}
\newtheorem{Lem}[Thm]{Lemma}
\newtheorem{Coro}[Thm]{Corollary}
\newtheorem{Prop}[Thm]{Proposition}
\newcommand{\1}{\mathbf{1}}
\newcommand{\R}{\mathbb{R}}
\newcommand{\Rd}{{\mathbb{R}^d}}
\renewcommand{\Re}{\text{Re}}
\newcommand{\F}{\mathscr{F}}
\newcommand{\N}{\mathbb{N}}
\renewcommand{\Re}{\text{Re}}
\renewcommand{\S}{\mathscr{S}}
\newcommand{\<}{\langle}
\renewcommand{\>}{\rangle}
\title{Dissipation and Semigroup on $H^k_n$: Non-cutoff Linearized Boltzmann Operator with Soft Potential}
\author{Dingqun DENG
	\thanks{Department of Mathematics, City University of Hong Kong %(\email{dingqdeng2-c@my.cityu.edu.hk}). }}
%\affil{
% Department of Mathematics, City University of Hong Kong\\
e-mail: dingqdeng2-c@my.cityu.edu.hk}}
\begin{document}

\maketitle

% All the functions are considered to be real-valued and Schwarz. {\color{red}How to get this result from Schwarz function to general function may be a problem.}

% {\color{red}check the assumption "admissible"}
\begin{abstract}
  In this paper, we find that the linearized collision operator $L$ of the non-cutoff Boltzmann equation with soft potential generates a strongly continuous semigroup on $H^k_n$, with $k,n\in\R$. In the theory of Boltzmann equation without angular cutoff, the weighted Sobolev space plays a fundamental role. The proof is based on pseudo-differential calculus and in general, for a specific class of Weyl quantization, the $L^2$ dissipation implies $H^k_n$ dissipation. This kind of estimate is also known as the G{\aa}rding's inequality.
%   \keywords{

  	  \textit{Keywords:}
  	% Boltzmann equation \and linearized collision operator \and global existence \and soft potential \and interpolation
  	Boltzmann equation, linearized collision operator, pseudo-differential operator, dissipation, strongly continuous semigroup.
\end{abstract}

%\begin{keywords}
%	Boltzmann equation, linearized collision operator, pseudo-differential operator, dissipation, strongly continuous semigroup.
%\end{keywords}
%\begin{AMS}
%	35S05, 47D03, 47D06.
%\end{AMS}

\section{Introduction}
In this article, we are interested in proving that the linearized Boltzmann operator $L$, defined by \eqref{DefL}, can generate a strongly continuous semigroup on weighted Sobolev space $H^k_n$, defined by \eqref{DefHkn}. 
The main result of this paper are theorem \ref{Dissipation_Theorem} and \ref{Main2}.
Previous G{\aa}rding's inequality is on $L^2$, but that's not enough for generating a strongly continuous semigroup on $H^k_n$.
The main difficulty is to prove that $L$ is dissipative on $H^k_n$ and the invertibility of $\lambda I-L$ for some $\lambda>0$. Continuing the work by \cite{Global2019} and representing the linearized Boltzmann operator into pseudo-differential operator in section \ref{section3},
we can split the linearized collision operator as $L = -b^w+K$, where $-b^w$ is dissipative on $L^2$ while $K$ is bounded on $L^2$. So it suffices to analyze the behavior of Weyl quantization $b^w$ on $H^k_n$. Since the argument is based on pseudo-differential operator, our work can also be applied to a more general symbol class.

\subsection{Model and notations}
Consider the Boltzmann equation in $d$-dimension $(d\ge2)$:
\begin{align}
  F_t + v\cdot\nabla_x F = Q(F,F).
\end{align}
Here $F = F(x,v,t)$ is the distribution function of particles at position $x\in\Rd$ with velocity $v\in\Rd$ at time $t\ge 0$. $Q(F,G)$ is the bilinear collision operator defined for sufficiently smooth functions $F,G$ by
\begin{align}
  Q(F,G) := \int_{\Rd}\int_{S^{d-1}}B(v-v_*,\sigma) (F'_*G' - F_*G) \, d\sigma dv_*,
\end{align}
where $F'_* = F(x,v'_*,t)$, $G' = G(x,v',t)$, $F_* = F(x,v_*,t)$, $G = G(x,v,t)$
and $(v,v_*)$ are the velocities of two gas particles before collision while
$(v',v'_*)$ are the velocities after collision satisfying the following conservation laws of momentum and energy,
\begin{align*}
  v+v_*=v'+v'_*,\ \ |v|^2+|v_*|^2=|v'|^2+|v'_*|^2.
\end{align*}We use the so-called $\sigma$-representation, that is, for $\sigma\in \mathbf{S}^{d-1}$,
\begin{align*}
  v' = \frac{v+v_*}{2}+\frac{|v-v_*|}{2}\sigma,\ \
  v'_* = \frac{v+v_*}{2}-\frac{|v-v_*|}{2}\sigma.
\end{align*}
and define the angle $\theta$ in the standard way
\begin{align*}
  \cos\theta = \frac{v-v_*}{|v-v_*|}\cdot \sigma,
\end{align*}where $\cdot$ denotes the usual inner product in $\R^d$.
The collision kernel $B$ satisfies
\begin{align}
  B(v-v_*,\sigma) = |v-v_*|^\gamma b(\cos\theta),
\end{align}
for some $\gamma\in\R$ and function $b$. Without loss of generality, we can assume $B(v-v_*,\sigma)$ is supported on $(v-v_*)\cdot\sigma\ge 0$ which corresponds to $\theta\in[0,\pi/2]$, since $B$ can be replaced by its symmetrized form $\overline{B}(v-v_*,\sigma) = B(v-v_*,\sigma)+B(v-v_*,-\sigma)$.
Moreover, we are going to work on the collision kernel without angular cut-off, which corresponds to the case of inverse power interaction laws between particles. That is,
\begin{align}
  b(\cos\theta)\approx \theta^{-d+1-2s}\ \text{ on }\theta \in (0,\pi/2).
\end{align}
Here we assume
\begin{align}
  s\in (0,1), \quad \gamma\in (-d,\infty).
\end{align}For Boltzmann equation without angular cut-off, the condition $\gamma+2s\le 0$ is called soft potential while $\gamma+2s>0$ is called hard potential. The behavior of this kernel gives non-integrability condition
\begin{align*}
  \int^{\pi/2}_0\sin^{d-2}\theta\,b(\cos\theta)\,d\theta=\infty,
\end{align*}which becomes the major difficulty in the theory of Boltzmann equation without angular cut-off.

We are looking for a solution $f$ near the normalized equilibrium, which is the normalized global Maxwellian
\begin{align*}
  \mu(v)=(2\pi)^{-d/2}e^{-|v|^2/2}.
\end{align*}
Set $F = \mu + \mu^{\frac{1}{2}}f$. Then the perturbation $f$ satisfies
\begin{align*}
  f_t + v\cdot\nabla_x f = Lf + \mu^{-1/2}Q(\mu^{1/2}f,\mu^{1/2}f),
\end{align*}
where $L$ is called the linearized Boltzmann operator defined by
\begin{align}\label{DefL}
  Lf := \mu^{-1/2}Q(\mu,\mu^{1/2}f) + \mu^{-1/2}Q(\mu^{1/2}f,\mu).
\end{align}

One may refer to \cite{Cercignani1994,Alexandre2012,Alexandre2001,Gressman2011} for more introduction on the mathematical theory of Boltzmann equation.
In \cite{Ukai1982,Deng2018}, the weighted $L^2$ space is necessary for the analysis to Boltzmann equation with angular cut-off and soft potential, since the estimate for nonlinear term $\mu^{-1/2}Q(\mu^{1/2}f,\mu^{1/2}f)$ in the equation is on the weighted $L^2$ space.
While for the non-cutoff case, this kind of estimate only valid in the weighted Sobolev space $H^k_n$, for instance \cite{Gressman2011,Alexandre2000,Alexandre2012}, since non-cutoff Boltzmann equation essentially requires derivative.

Assume $k,n\in\R$ and define the weighted Sobolev space $H^k_n(\Rd )$ by
\begin{align*}
  H^k_n(\Rd):=\{f\in\S'(\Rd):\|f\|_{H^k_n}<\infty\},
\end{align*}where
\begin{align}\label{DefHkn}
  \|f\|_{H^k_n}:=\|\<\eta\>^k\F(\<\cdot\>^nf)\|_{L^2},
\end{align}where $\F$ is the Fourier transform on $\Rd$: $\F f(\eta):=\int_\Rd f(v)e^{2\pi i v\cdot\eta}\,dv$. For later use, we define
\begin{align}
  c(v,\eta):=\<v\>^n\<\eta\>^k.
\end{align}Then $c$ is a $\Gamma$-admissible weight function as well as a symbol in $S(c)$, with $\Gamma=|dv|^2+|d\eta|^2$. One may refer to the appendix as well as \cite{Lerner2010,Bony1998-1999,Beals1981,Bony1994} for more information about pseudo-differential calculus.
In the corollary \ref{equaivlent_coro} below, we can prove that
\begin{align}
  \|c^w(v,D_v)f\|_{L^2}\approx\|\<v\>^n\<D_v\>^kf\|_{L^2}\approx \|\<D_v\>^k\<v\>^nf\|_{L^2}.
\end{align}Thus the space $(H(c),\|\cdot\|_{H(c)})$ is equivalent to $(H^k_n,\|\cdot\|_{H^k_n})$. So we don't distinguish this two spaces below and will equip $H^k_n$ with norm $\|\cdot\|_{H(c)}=\|c^w(\cdot)\|_{L^2}$.

\paragraph{Notations} Throughout this article, we shall use the following notations.
For any $v\in\Rd$, we denote $\<v\>=(1+|v|^2)^{1/2}$. The gradient in $v$ is denoted by $\partial_v$. Also we use notation $D_v=\frac{\partial_v}{i}$ and $\<D_v\>^kf=\F^{-1}(\<\cdot\>^k\F f)$. Let $A\in\Rd$, denote $\1_{A}$ to be the characteristic function that equal to $1$ on $A$ and $0$ on $\Rd\setminus A$. $L(X)$ is the space of all linear continuous operator on Banach space $X$. 

The notation $a\approx b$ (resp. $a\gtrsim b$, $a\lesssim b$) for positive real function $a$, $b$ means there exists $C>0$ not depending on possible free parameters such that $C^{-1}a\le b\le Ca$ (resp. $a\ge C^{-1}b$, $a\le Cb$) on their domain. $\Re (a)$ means the real part of complex number $a$.

For pseudo-differential calculus, we write $\Gamma=|dv|^2+|d\eta|^2$ to be an admissible metric.
Let $m$, $l$ be two $\Gamma$-admissible weight functions and write $S(m):=S(m,\Gamma)$, $H(m):=H(m,\Gamma)$, $a_{K,l}:=a+Kl$. $a^w$ is the Weyl quantization.

\subsection{Main results}

Our first result is on general symbols. We find that the $L^2$ dissipation of Weyl quantization $a^w(v,D_v)$ can imply the $H^n_k$ dissipation.

\begin{Thm}\label{Dissipation_Theorem}
    Let $m$, $l$ be two $\Gamma$-admissible weights, $\rho>0$, $\varepsilon\in(0,1)$. Assume $l\in S(l)$, $l\lesssim m$, $m\<\eta\>^{-N}\lesssim l$ for some $N>0$ and

    (1). $a\in S(m)$, $\partial_\eta a\in S(\varepsilon m_{K,l}+\varepsilon^{-\rho}l)$ uniformly in $\varepsilon$.

    (2). $b^{1/2}\in S(m^{1/2})$, $\partial_\eta (b^{1/2})_{K,l^{1/2}}\in S(K^{-\kappa}(m^{1/2})_{K,l^{1/2}})$ uniformly in $K$ and\\ $(b^{1/2})_{K,l^{1/2}}\gtrsim (m^{1/2})_{K,l^{1/2}}$.

    (3). Suppose for $f\in\S$,
  \begin{align}\label{eqq4}
    \Re(a^w(v,D_v)f,f)_{L^2}\ge \frac{1}{C}\|(b^{1/2})^w(v,D_v)f\|^2_{L^2} - C\|(l^{1/2})^wf\|^2_{L^2},
  \end{align}for some constant $C$ independent of $f$.
  Then for $k,n\in\R$, $f\in\S$,
  \begin{align}\label{eqq5}
    \Re(a^w(v,D_v)f,f)_{H^k_n}
    &\ge \frac{1}{C'}\|(b^{1/2})^w(v,D_v)c^wf\|^2_{L^2} - C_{k}\|(l^{1/2})^wc^wf\|^2_{L^2},
  \end{align}for some $C', C_{k}>0$.
  \end{Thm}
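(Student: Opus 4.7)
The plan is to reduce the $H^k_n$ dissipation to an $L^2$ dissipation applied to $g := c^w f \in \S$. Since $c$ is real and $\Gamma$-admissible, $c^w$ is formally self-adjoint and maps $\S$ into $\S$ continuously. Writing $c^w a^w = a^w c^w + [c^w, a^w]$,
\begin{equation*}
\Re(a^w f, f)_{H^k_n} = \Re(c^w a^w f, c^w f)_{L^2} = \Re(a^w g, g)_{L^2} + \Re\bigl([c^w, a^w] f,\, c^w f\bigr)_{L^2}.
\end{equation*}
Hypothesis \eqref{eqq4} applied to $g$ bounds the first term below by $\tfrac{1}{C}\|(b^{1/2})^w g\|^2_{L^2} - C\|(l^{1/2})^w g\|^2_{L^2}$, so the main task is to control the commutator as an absorbable error.

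By the Moyal calculus for the flat admissible metric $\Gamma = |dv|^2+|d\eta|^2$, the symbol of $[c^w, a^w]$ equals $\tfrac{1}{i}\{c, a\} + r_{\ge 3}$ where $r_{\ge 3}$ collects Moyal terms of order $\ge 3$. A direct computation gives $\{c, a\} = c \cdot q$ with
\begin{equation*}
q := \frac{k\eta}{\langle \eta\rangle^2}\,\partial_v a - \frac{nv}{\langle v\rangle^2}\,\partial_\eta a \ \in\ S\bigl(\langle\eta\rangle^{-1} m + \langle v\rangle^{-1}(\varepsilon m_{K,l}+\varepsilon^{-\rho}l)\bigr),
\end{equation*}
by $\partial_v a\in S(m)$ and assumption~(1). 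Using the Weyl composition $(cq)^w = q^w c^w + R_0^w$ with $R_0$ in a strictly smaller class, we obtain
\begin{equation*}
\Re\bigl([c^w, a^w] f, c^w f\bigr)_{L^2} = \Re\bigl(\tfrac{1}{i}q^w g, g\bigr)_{L^2} + \Re(\rho^w f, c^w f)_{L^2},
\end{equation*}
where $\rho$ consolidates $R_0$ and $r_{\ge 3}$. If $a$ is real then $q$ is real, $q^w$ is self-adjoint, $\tfrac{1}{i}q^w$ is skew-adjoint, and $\Re(\tfrac{1}{i}q^w g, g)=0$; for general complex $a$, writing $q = q_r + iq_i$, the same calculation gives $\Re(\tfrac{1}{i}q^w g, g) = (q_i^w g, g)$, a term of the same form as $\rho$ and handled identically below.

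It remains to estimate $|\Re(\rho^w f, c^w f)|$. Each piece of $\rho$ has symbol $c\cdot \tilde\rho$ with $\tilde\rho$ carrying an extra $\langle v\rangle^{-1}$ or $\langle\eta\rangle^{-1}$ factor beyond $q$, so the Weyl composition $(c\tilde\rho)^w = \tilde\rho^w c^w + R'^w$ gives $(\rho^w f, c^w f) = (\tilde\rho^w g, g) + (R'^w f, c^w f)$, and recursing, after finitely many steps the residual symbols lie in $S(l)$: the $\langle\eta\rangle^{-1}$-iterations terminate via $m\langle\eta\rangle^{-N}\lesssim l$, while the $\langle v\rangle^{-1}$-iterations inherit the $\varepsilon$-prefactor from assumption (1) and are absorbed via $\varepsilon \|(m^{1/2})^w g\|^2$. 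At each stage, factorizing the symbol as $m^{1/2}\cdot(\text{rest})$ and applying Cauchy--Schwarz with Young's inequality converts the pairing into $\delta\|(m^{1/2})^w g\|^2 + C_\delta\|(\text{rest})^w g\|^2$. Finally, assumption (2) together with a Fefferman--Phong-type bound applied to $(b^{1/2})_{K,l^{1/2}}\gtrsim (m^{1/2})_{K,l^{1/2}}$ gives $\|(m^{1/2})^w g\|^2_{L^2}\lesssim \|(b^{1/2})^w g\|^2_{L^2}+K^2\|(l^{1/2})^w g\|^2_{L^2}$, so the $\|(m^{1/2})^w g\|^2$ errors absorb into $\tfrac{1}{C}\|(b^{1/2})^w g\|^2$, yielding \eqref{eqq5} with $C',C_k$ depending on $k, n, \varepsilon, K$. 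The main obstacle is the pseudo-differential bookkeeping: verifying that each Moyal remainder does lie in a strictly smaller class, and that the $N$-fold $\langle\eta\rangle^{-1}$ iteration combined with the $\langle v\rangle^{-1}$ iteration terminates in $S(l)$.
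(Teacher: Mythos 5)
Your opening reduction is exactly the paper's: write $\Re(a^wf,f)_{H^k_n}=\Re(a^wc^wf,c^wf)_{L^2}+\Re([c^w,a^w]f,c^wf)_{L^2}$ and apply \eqref{eqq4} to $g=c^wf$, so everything hinges on the commutator term. It is there that your argument has genuine gaps. First, the decisive quantitative step --- turning symbol-class information about the commutator into the operator bound $\varepsilon\|(b^{1/2})^wc^wf\|_{L^2}^2+C_{\varepsilon}\|(l^{1/2})^wc^wf\|_{L^2}^2$ --- is not achieved by ``factorizing the symbol as $m^{1/2}\cdot(\text{rest})$ and applying Cauchy--Schwarz'': a pointwise factorization of a symbol does not factor its Weyl quantization, and moving a factor across the inner product requires the composition calculus together with the invertibility of $((b^{1/2})_{K,l^{1/2}})^w$ for $K$ large, which is precisely what assumption (2) (the $K^{-\kappa}$ derivative condition plus the lower bound) is for; in the paper this is Lemma \ref{inverse_lemma} combined with Lemma \ref{bound_varepsilon}, applied after writing $([c^w,a^w]f,c^wf)=\big((((b^{1/2})_{K',l^{1/2}})^w)^{-1}[c^w,a^w](c^{-1})^wc^wf,\,((b^{1/2})_{K',l^{1/2}})^wc^wf\big)$. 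Your substitute, a ``Fefferman--Phong-type bound'' giving $\|(m^{1/2})^wg\|^2\lesssim\|(b^{1/2})^wg\|^2+K^2\|(l^{1/2})^wg\|^2$, is asserted rather than proved, and Fefferman--Phong is the wrong tool: it would produce an error of size $\|g\|_{L^2}^2$, which is \emph{not} controlled by $\|(l^{1/2})^wg\|_{L^2}^2$ (in the application $l=\<v\>^{\gamma+2s}$ decays at infinity). The stated comparison is true, but only through the same inversion machinery you have not set up.

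Second, your Moyal-expansion bookkeeping does not close as described. You kill only the leading bracket $\tfrac1i\{c,a\}$ by skew-adjointness, and claim the remaining terms gain $\<\eta\>^{-1}$ or $\<v\>^{-1}$ per step until they reach $S(l)$ after $\sim N$ iterations. But the intermediate odd-order terms of type $\partial_\eta^jc\,\partial_v^ja$ with $1<j<N$ carry only $\<\eta\>^{-j}m$: they are neither $O(\varepsilon)$-small (no $\eta$-derivative falls on $a$) nor yet in $S(l)$, so your recursion leaves them untreated; likewise the complex-$a$ case (which the theorem allows, and which the application needs since $b$ is not real) is only addressed at leading order. Both issues, and indeed the whole recursion and the skew-adjointness device, are unnecessary: since $m\<\eta\>^{-N}\lesssim l$, the Young-type inequality \eqref{varepsilon_inequality} gives $\<\eta\>^{-1}m\lesssim\varepsilon m+C\varepsilon^{-\delta}l$, so already the one-step integral remainder formula for the composition shows $[c^w,a^w](c^{-1})^w\in Op(\varepsilon m_{K,l}+\varepsilon^{-\delta}l)$ --- the leading Poisson bracket included --- which is the paper's single estimate replacing your entire expansion. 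So the skeleton of your proof is right, but the two load-bearing steps (the $b^{1/2}$-inversion argument and the treatment of all, not just leading, commutator terms) are missing or mis-justified as written.
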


The assumption on $a$ essentially represents the smallness on $\partial_\eta a$, which can be viewed as a general version of \eqref{varepsilon_inequality}.
Although there are a lot of restriction on symbol $b$, in our application to Boltzmann equation, we can choose $b=m$. Then these assumptions are trivial for checking. The real part $\Re$ in \eqref{eqq4}\eqref{eqq5} can be replaced by imaginary part, since they don't have essential difference.
Also the symbol $c$ can be generalized to a symbol class that $\partial_v c$ and $\partial_\eta c$ have better decay on direction $v$, $\eta$ respectively.

The main idea is based on controlling the commutator $[c^w,a^w]$. Once we get the estimate on it, we can evaluate the difference between $\Re(a^wc^wf,c^wf)_{L^2}$ and $\Re(a^wf,f)_{H^k_n}$. Then we can have the dissipation on $H^k_n$ from $L^2$.

As an application, we can prove our result on strongly continuous semigroup.
Define
\begin{align}
  \tilde{a}(v,\eta):=\<v\>^\gamma(1+|\eta|^2+|\eta\wedge v|^2+|v|^2)^s.
\end{align}
Then $\tilde{a}$ is a $\Gamma$-admissible weight proved in \cite{Global2019}.

\begin{Thm}\label{Main2}Assume $\gamma+2s\le 0$. There exists $C_1>1$ such that the linearized Boltzmann operator
  $L$ generates a strongly continuous semigroup on $H(c)=H^k_n(\Rd)$ with domain
  $D(L):=H((\tilde{a}+C_1)c)$.
\end{Thm}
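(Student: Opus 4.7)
The plan is to apply the Lumer--Phillips theorem in its quasi-contraction form on the Hilbert space $H(c)$, equipped with the inner product $(c^w\cdot,c^w\cdot)_{L^2}$. Following Section \ref{section3} (and \cite{Global2019}), I write $L=-b^w+K$ where $b\approx\tilde a$ is $\Gamma$-admissible, $-b^w$ is $L^2$-dissipative, and $K$ is bounded on $L^2$. The proof then reduces to three ingredients for $L-C_1I$: dissipativity on $H(c)$, surjectivity of $(\lambda+C_1)I-L\colon D(L)\to H(c)$ for at least one $\lambda>0$, and density/closedness of the domain $D(L)=H((\tilde a+C_1)c)$.

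For dissipativity, the hypothesis \eqref{eqq4} holds by construction for $a=b$ (with $b^{1/2}$ in its stated role and a suitable subdominant weight $l$), the weight assumptions being verified using the explicit form of $\tilde a$. Theorem \ref{Dissipation_Theorem} then upgrades this to
\[
\Re(b^w f,f)_{H(c)}\gtrsim\|(b^{1/2})^w c^w f\|^2_{L^2}-C_k\|(l^{1/2})^w c^w f\|^2_{L^2}.
\]
The $L^2$-boundedness of $K$ extends to $H(c)$ via pseudo-differential conjugation, since the symbol of $c^w K(c^w)^{-1}$ lies in $S(1)$ modulo lower-order commutators. Absorbing the lower-order error $C_k\|(l^{1/2})^w c^w f\|^2_{L^2}$ by choosing $C_1$ sufficiently large yields $\Re((C_1I-L)f,f)_{H(c)}\ge0$, i.e.\ $L-C_1I$ is dissipative.

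For the range condition, the domain $D(L)=H((\tilde a+C_1)c)$ is tailored so that $C_1I+b^w$ is elliptic of weight $\tilde a+C_1$ in the Weyl--H\"ormander calculus associated with $\Gamma$. One constructs a parametrix in $S((\tilde a+C_1)^{-1})$, which realizes $C_1I+b^w$ as an isomorphism $D(L)\to H(c)$. The bounded perturbation $K$ is absorbed either by a Neumann series for $C_1$ large, or by combining the parametrix with the $H(c)$-dissipativity to get surjectivity of $\lambda I-L$ for some large $\lambda$. Density of $D(L)$ follows from $\S(\Rd)\subset D(L)$, and closedness from bounded invertibility of $\lambda I-L$. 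Applying Lumer--Phillips to $L-C_1I$ then produces a contraction semigroup, and hence $L$ generates a $C_0$-semigroup on $H(c)$.

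The main obstacle is the range step: constructing and controlling the parametrix of $C_1I+b^w$ within the precise symbol class dictated by $\tilde a+C_1$, and verifying that $K$ is a genuinely lower-order perturbation on the scale $D(L)\hookrightarrow H(c)$ rather than merely $L^2$-bounded, so that the perturbative inversion closes. Everything else (dissipativity and density) is a direct consequence of Theorem \ref{Dissipation_Theorem} and standard symbolic calculus.
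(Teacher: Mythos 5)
Your overall architecture is essentially the paper's, repackaged: the paper first lets $-(C_1+b)^w$ generate a contraction semigroup via Corollary \ref{semigroup} (your Lumer--Phillips step, with dissipativity from Theorem \ref{Dissipation_Theorem} and surjectivity from Lemma \ref{inverse_lemma} applied to $b_1=C_1+b$ with $m=\tilde a+C_1$, $l=1$), and then adds $K$ by the bounded-perturbation theorem, rather than folding $K$ into the dissipativity and a Neumann series as you do. Either packaging is fine once the same two analytic inputs are available.

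The genuine gap is your treatment of $K=L_2+\widetilde L_{1,\delta,a}+L_{1,3,\delta}+L_{1,4,\delta}$. You claim its $L^2$-boundedness ``extends to $H(c)$ via pseudo-differential conjugation, since the symbol of $c^wK(c^w)^{-1}$ lies in $S(1)$,'' but $K$ is a priori only an integral operator built from the collision kernel: $L^2$-boundedness of an operator gives no control on $H^k_n$, and speaking of ``the symbol of $c^wK(c^w)^{-1}$'' presupposes exactly what must be proved, namely that $K$ is a quantization with symbol in a class stable under conjugation by $c^w$. That is the content of Theorem \ref{Thm32}, and it is not routine symbolic calculus: it requires the Carleman representation of each piece, the cancellation lemma (for $L_{1,3,\delta}$ and $L_{2,ca}$), a splitting into singular and non-singular parts in $h$, and the Gaussian estimates of Lemmas \ref{v_and_v_star} and \ref{derivative} to conclude that every term has symbol in $S(\langle v\rangle^{\gamma+2s})\subset S(1)$ when $\gamma+2s\le 0$. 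Until this is established, both your $H(c)$-dissipativity of $L-C_1I$ and your Neumann-series inversion are unsupported. Conversely, the step you single out as the main obstacle --- the parametrix for $C_1I+b^w$ on the scale $H((\tilde a+C_1)c)\to H(c)$ --- is comparatively light: it is precisely Lemma \ref{inverse_lemma}, once one checks $|C_1+b+\lambda|\gtrsim \lambda+C_1+\tilde a$ for $C_1$ large (using $a\approx\tilde a>0$ and that $a_s$, $a_d$ are lower order, as in the proof of Theorem \ref{Thm31}); so the real difficulty sits in the opposite place from where you locate it.
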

The constant $C_1>1$ here is to ensure the continuous embedded: $H((\tilde{a}+C_1)c)\hookrightarrow H(c)$.
The linearized Boltzmann operator $L$ can be splitted as
\begin{align}
  L=-b^w+K.
\end{align}So once we apply the theorem \ref{Dissipation_Theorem} to symbol $b$, theorem \ref{Main2} follows from the boundedness of $K$. Also we can prove that $K$ can be written as a pseudo-differential operator with symbol in $S(1)$, thus $K$ is bounded on $H((\tilde{a}+C_1)c)$ and hence on $H(c)$.

\paragraph{Organization of the article}

The paper is organized as follows. In Section 2, we provide the dissipation on $H^k_n$ and have a discussion on general symbol class on $\Rd$. Some useful lemmas in pseudo-differential calculus are provided.
In Section 3, we deal with the linearized Boltzmann operator $L=-b^w+K$ on $\R^d$,
where Carleman representation is applied from time to time.
An appendix is devoted to a short review of some useful tools used in this work such as pseudo-differential calculus and semigroup theory.

\section{Dissipation on $H^k_n$}
In this section, we are going to prove that the $L^2$ dissipation implies $H^k_n$ dissipation. Here, we fix $k,n\in\R$, $\kappa>0$ and consider $\R^d_v$ to be the whole space.
Let $m$, $l$ be two $\Gamma$-admissible weight functions.
Recall that $a_{K,l}:=a+Kl$, $m_{K,l}:=m+Kl$ for $K>1$.
Also, we always assume in this section that
\begin{align}\label{assumption_l}
	l\in S(l)\ \text{ and }\ l\lesssim m.
\end{align}
We should remind readers that the lemma \ref{inverse_lemma}, \ref{inverse_lemma_2}, \ref{inverse_bounded_lemma}, \ref{bound_varepsilon} below are valid for any $\Gamma$-admissible metric $c$, which will be used later.
\begin{Lem}\label{inverse_lemma}
  Assume $a\in S(m)$, $\partial_\eta (a_{K,l})\in S(K^{-\kappa}m_{K,l})$ uniformly in $K$ and
  $|a_{K,l}|\gtrsim m_{K,l}$. Then

   (1). $a^{-1}_{K,l}\in S(m^{-1}_{K,l})$, uniformly in $K$, for $K>1$.

  (2). There exists $K_0>1$ sufficiently large such that for all $K>K_0$, $a^w_{K,l}:H(mc)\to H(c)$ is invertible and its inverse $(a^w_{K,l})^{-1}: H(c) \to H(mc)$ satisfies
  \begin{align}\label{inverse_equation}
    (a^w_{K,l})^{-1} = G_{1,K,l}(a^{-1}_{K,l})^w = (a^{-1}_{K,l})^wG_{2,K,l},
  \end{align}where $G_{1,K,l}\in L(H(mc))$, $G_{2,K,l}\in L(H(c))$ with operator norm smaller than $2$.
\end{Lem}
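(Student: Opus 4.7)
The strategy is classical: prove part (1) directly from the quotient/chain rule on symbols, then deduce part (2) by showing that $a_{K,l} \# a^{-1}_{K,l}$ and $a^{-1}_{K,l} \# a_{K,l}$ both equal the identity plus a remainder whose Weyl quantization has small operator norm when $K$ is large, and finally invert the perturbed identity by a Neumann series.

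For part (1), I would argue that $a^{-1}_{K,l}$ is well-defined on $\R^{2d}$ since $|a_{K,l}|\gtrsim m_{K,l}\ge K l>0$, and that its derivatives obey $S(m^{-1}_{K,l})$ estimates uniformly in $K$. The computation is a standard induction based on Faà di Bruno's formula: any $\partial^\alpha_v\partial^\beta_\eta(a^{-1}_{K,l})$ is a finite linear combination of terms of the form $a^{-1-p}_{K,l}\prod_{j=1}^p \partial^{\alpha_j}_v\partial^{\beta_j}_\eta a_{K,l}$ with $\sum\alpha_j=\alpha$, $\sum\beta_j=\beta$, $1\le p\le|\alpha|+|\beta|$. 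Since $a\in S(m)$ and $l\in S(l)$ with $l\lesssim m$, we have $a_{K,l}\in S(m_{K,l})$ uniformly in $K$, so each such product is $O(m^p_{K,l})$. Dividing by $a^{1+p}_{K,l}\gtrsim m^{1+p}_{K,l}$ yields the desired $O(m^{-1}_{K,l})$ bound. Moreover, as soon as any $\beta_j\ne 0$, the hypothesis $\partial_\eta a_{K,l}\in S(K^{-\kappa}m_{K,l})$ contributes an extra $K^{-\kappa}$ factor, which is the key gain that will drive part (2).

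For part (2), I would compute the Moyal composition $a_{K,l}\#a^{-1}_{K,l}=1+r_{1,K,l}$ and $a^{-1}_{K,l}\#a_{K,l}=1+r_{2,K,l}$. The principal symbol of the product is $a_{K,l}\cdot a^{-1}_{K,l}=1$, so $r_{i,K,l}$ is a tail starting at order one, whose leading term is $\frac{1}{2i}\{a_{K,l},a^{-1}_{K,l}\}=\frac{1}{2i}\bigl(\partial_\eta a_{K,l}\cdot\partial_v a^{-1}_{K,l}-\partial_v a_{K,l}\cdot\partial_\eta a^{-1}_{K,l}\bigr)$. Each term contains exactly one $\eta$-derivative of $a_{K,l}$ or of $a^{-1}_{K,l}$, and by part (1) this produces a factor $K^{-\kappa}$; combining with the remaining $S(m_{K,l})$ and $S(m^{-1}_{K,l})$ factors gives $r_{i,K,l}\in S(K^{-\kappa},\Gamma)$ uniformly in $K$. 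The same $K^{-\kappa|\alpha|}$ gain appears at every order $|\alpha|\ge 1$ of the Moyal expansion since each term in the stationary-phase/oscillatory-integral remainder involves at least one $\eta$-derivative falling on $a_{K,l}$ or $a^{-1}_{K,l}$; the standard remainder estimates for the $\#$-calculus with respect to the flat admissible metric $\Gamma$ then show $r_{i,K,l}\in S(K^{-\kappa},\Gamma)$ as a whole.

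Once this is in hand, the $\Gamma$-admissibility of $c$ and the Calderón–Vaillancourt/boundedness theorem give $r_{i,K,l}^w\in L(H(c))$ with operator norm $\lesssim K^{-\kappa}$, so for $K\ge K_0$ sufficiently large the norms are $\le 1/2$. The operators $I+r_{i,K,l}^w$ are then invertible on $H(c)$ (and on $H(mc)$, by the same argument applied with the weight $mc$) via a Neumann series, with inverses of norm $\le 2$. Setting $G_{2,K,l}:=(I+r_{1,K,l}^w)^{-1}\in L(H(c))$ and $G_{1,K,l}:=(I+r_{2,K,l}^w)^{-1}\in L(H(mc))$, one obtains a right and a left inverse of $a^w_{K,l}:H(mc)\to H(c)$, which must coincide by uniqueness, giving the two representations in \eqref{inverse_equation}. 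The main technical obstacle is the uniform-in-$K$ control of the full Moyal remainder in $S(K^{-\kappa},\Gamma)$: one has to track how the $K^{-\kappa}$ gain survives the integration against the symplectic Fourier kernel and the finite Taylor remainder, which requires a careful bookkeeping of where $\partial_\eta$ falls on $a_{K,l}$ versus $a^{-1}_{K,l}$ in each term of the expansion.
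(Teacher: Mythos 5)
Your proposal is correct and follows essentially the same route as the paper: build the parametrix $(a^{-1}_{K,l})^w$ from part (1), show the composition remainders are in $S(1)$ with seminorms $O(K^{-\kappa})$ uniformly in $K$, invert $I+R^w_{K,l}$ by a Neumann series on $H(c)$ and on $H(mc)$, and identify the resulting left and right inverses to get \eqref{inverse_equation}. The only (minor) difference is that the paper avoids the full Moyal asymptotic expansion you flag as the main technical obstacle: it uses the exact first-order composition formula with remainder $\int_0^1\bigl(\partial_v a_{K,l}\#_\theta\,\partial_\eta a^{-1}_{K,l}-\partial_\eta a_{K,l}\#_\theta\,\partial_v a^{-1}_{K,l}\bigr)d\theta$, where each term carries exactly one $\eta$-derivative, so the $K^{-\kappa}$ gain follows at once from the uniform seminorm bound \eqref{sharp_theta}.
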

\begin{proof}
  Since $l\in S(l)\subset S(m)$, we have $a_{K,l}\in S(m_{K,l})\subset S(m)$ and so $a_{K,l}$ maps $H(mc)$ continuously into $H(c)$.
  By composition formula of Weyl quantization,
  \begin{align}
    a^w_{K,l}(a^{-1}_{K,l})^w=I+R^w_{K,l},
  \end{align}where
  \begin{align}
    R_{K,l}=\int^1_0(\partial_{v}a_{K,l}\#_\theta \partial_{\eta} a^{-1}_{K,l}-\partial_{\eta} a_{K,l}\#_\theta \partial_{v} a^{-1}_{K,l})\,d\theta.
  \end{align}
  For any $1\le j\le d$,
    \begin{align*}
      \partial_{\eta_j} a^{-1}_{K,l} = -\frac{\partial_{\eta_j}a_{K,l}}{a_{K,l}^2},\quad
      |\partial_{\eta_j} a^{-1}_{K,l}|\lesssim \frac{K^{-\kappa}m_{K,l}}{m_{K,l}^2}
      \lesssim \frac{K^{-\kappa}}{m_{K,l}},
    \end{align*}
    Estimate on higher derivative follows from Leibniz formula.
    Thus we have $\partial_{\eta} a^{-1}_{K,l} \in S(K^{-\kappa}m_{K,l}^{-1})$ and $\partial_{\eta} a_{K,l}\in S(K^{-\kappa}m_{K,l})$ uniformly in $K$.
    Similarly, $\partial_{v} a^{-1}_{K,l}\in S(m_{K,l}^{-1})$ and by definition, $\partial_{v}a_{K,l}\in S(m_{K,l})$ uniformly in $K$.
  Applying \ref{sharp_theta}, for any $N\in\N$, there exists $l_N\in\N$ independent of $K$ and $\theta$ such that
  \begin{align*}
    \|\partial_{v}a_{K,l}\#_\theta \partial_{\eta} a^{-1}_{K,l}\|_{N;S(1)}\le C_N\|\partial_{v}a_{K,l}\|_{l_N;S(m_{K,l})}\|\partial_{\eta} a^{-1}_{K,l}\|_{l_N;S(m^{-1}_{K,l})}\le C'_NK^{-\kappa}.
  \end{align*}Similarly,
  \begin{align*}
    \|\partial_{\eta} a_{K,l}\#_\theta \partial_{v} a^{-1}_{K,l}\|_{N;S(1)}\le C'_NK^{-\kappa}.
  \end{align*}
  Thus $\{K^{\kappa}\partial_{\eta} a_{K,l}\#_\theta \partial_{v} a^{-1}_{K,l}\}$ and $\{K^{\kappa}\partial_{v} a_{K,l}\#_\theta \partial_{\eta} a^{-1}_{K,l}\}$
  are uniformly bounded sets in $S(1)$ with respect to $K$ and $\theta$.
  Thus by Remark 3.4 in \cite{Beals1981}, the operator $K^{\kappa}R^w_{K,l}$ is linear continuous on $H(mc)$ and $H(c)$ with operator norm independent of $K$.
  So there exists $K_0>1$ such that for $K>K_0$,
  \begin{align*}
    I+K^{-\kappa}(K^{\kappa}R^w_{K,l})
  \end{align*}is invertible on $H(mc)$ and $H(c)$ and the operator norm of inverse $(I+R^w_{K,l})^{-1}$ on $H(mc)$ and $H(c)$ are smaller than $2$. Thus
  \begin{align*}
    a^w_{K,l}(a^{-1}_{K,l})^w(I+R^w_{K,l})^{-1} = I\quad \text{on $H(c)$.}
  \end{align*}
  Similarly, by choosing $K_0$ sufficiently large, we can find $\tilde{R}_{K,l}\in S(1)$ such that $(I+\tilde{R}^w_{K,l})^{-1}$ is invertible on $H(mc)$ whenever $K>K_0$ and
  \begin{align*}
    (I+\tilde{R}^w_{K,l})^{-1}(a^{-1}_{K,l})^wa^w_{K,l} = I\quad \text{on $H(mc)$.}
  \end{align*}
  Noticing $a^{-1}_{K,l}\in S(m^{-1})$ and $(a^{-1}_{K,l})^w$ maps $H(c)$ continuously into $H(mc)$, we obtained that $a^w_{K,l}:H(mc)\to H(c)$ has left inverse and right inverse, and hence is invertible with inverse in the form of \eqref{inverse_equation}.
\end{proof}

Notice that in this lemma, the symbol $a$ may not be real-valued. This is necessary in next section. For further application, we state a similar lemma on $a^{1/2}_{K,l}$, which needs $a$ to be positive.

\begin{Lem}\label{inverse_lemma_2}
  Assume $a\in S(m)$, $\partial_\eta (a_{K,l})\in S(K^{-\kappa}m_{K,l})$ uniformly in $K$ and
  $a_{K,l}\gtrsim m_{K,l}$.\\
  Then (1). $a^{1/2}_{K,l}\in S(m^{1/2}_{K,l})$, $a^{-1/2}_{K,l}\in S(m^{-1/2}_{K,l})$, uniformly in $K$, for $K>1$.\\
  (2). There exists $K_0>1$ sufficiently large such that for all $K>K_0$, $(a^{1/2}_{K,l})^w:H(m^{1/2}c)\to H(c)$ is invertible and its inverse $((a^{1/2}_{K,l})^w)^{-1}: H(c) \to H(m^{1/2}c)$ satisfies
  \begin{align}\label{inverse_equation_2}
	((a^{1/2}_{K,l})^w)^{-1} = F_{1,K,l}(a^{-1/2}_{K,l})^w = (a^{-1/2}_{K,l})^wF_{2,K,l},
  \end{align}where $F_{1,K,l}\in L(H(m^{1/2}c))$, $F_{2,K,l}\in L(H(c))$ with operator norm small than $2$.
\end{Lem}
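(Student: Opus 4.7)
The plan is to mirror the two-step structure of Lemma \ref{inverse_lemma} with $a_{K,l}$ replaced by $a^{1/2}_{K,l}$: first establish the symbol estimates for $a^{\pm 1/2}_{K,l}$ (part (1)), then invert $(a^{1/2}_{K,l})^w$ via a Neumann series applied to the composition $(a^{1/2}_{K,l})^w(a^{-1/2}_{K,l})^w$. The positivity assumption $a_{K,l}\gtrsim m_{K,l}$ replaces the nonvanishing assumption $|a_{K,l}|\gtrsim m_{K,l}$ of the previous lemma and ensures the square root is smooth.

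For part (1), because $a_{K,l}\gtrsim m_{K,l}\gtrsim l$ is bounded away from zero on the whole phase space, $a^{\pm 1/2}_{K,l}$ are smooth functions. A direct chain rule gives
\begin{align*}
  \partial_v(a^{1/2}_{K,l})=\frac{\partial_v a_{K,l}}{2a^{1/2}_{K,l}},\qquad \partial_\eta(a^{1/2}_{K,l})=\frac{\partial_\eta a_{K,l}}{2a^{1/2}_{K,l}},
\end{align*}
and the corresponding formulas for $a^{-1/2}_{K,l}$. Since $a_{K,l}\gtrsim m_{K,l}$ implies $a^{-1/2}_{K,l}\lesssim m^{-1/2}_{K,l}$, combining with the hypotheses $\partial_v a_{K,l}\in S(m_{K,l})$ and $\partial_\eta a_{K,l}\in S(K^{-\kappa}m_{K,l})$ yields $\partial_v(a^{1/2}_{K,l})\in S(m^{1/2}_{K,l})$ and $\partial_\eta(a^{1/2}_{K,l})\in S(K^{-\kappa}m^{1/2}_{K,l})$, uniformly in $K$. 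The higher derivatives are controlled by Faà di Bruno; each additional $\eta$-differentiation still introduces a factor $K^{-\kappa}$ (the factor $a^{-j/2}_{K,l}$ coming from differentiating the square root never amplifies $K$). The same argument applied to $a^{-1/2}_{K,l}=(a^{-1}_{K,l})^{1/2}$, using that $a^{-1}_{K,l}\in S(m^{-1}_{K,l})$ as in the proof of Lemma \ref{inverse_lemma}(1), gives $a^{-1/2}_{K,l}\in S(m^{-1/2}_{K,l})$ with the same $K^{-\kappa}$ gain on $\eta$-derivatives.

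For part (2), the Weyl composition formula gives
\begin{align*}
  (a^{1/2}_{K,l})^w(a^{-1/2}_{K,l})^w=I+R^w_{K,l},
\end{align*}
with $R_{K,l}=\int_0^1\bigl(\partial_v a^{1/2}_{K,l}\#_\theta\partial_\eta a^{-1/2}_{K,l}-\partial_\eta a^{1/2}_{K,l}\#_\theta\partial_v a^{-1/2}_{K,l}\bigr)\,d\theta$. By part (1) each term is a $\#_\theta$-product of one symbol in $S(m^{1/2}_{K,l})$ (resp.\ $S(m^{-1/2}_{K,l})$) with one in $S(K^{-\kappa}m^{-1/2}_{K,l})$ (resp.\ $S(K^{-\kappa}m^{1/2}_{K,l})$), so by the same $\#_\theta$ continuity estimate invoked in the proof of Lemma \ref{inverse_lemma}, the family $\{K^\kappa R_{K,l}\}_{K>1}$ is bounded in $S(1)$ uniformly in $\theta$. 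Hence by Remark 3.4 in \cite{Beals1981}, $K^\kappa R^w_{K,l}$ is uniformly bounded on both $H(m^{1/2}c)$ and $H(c)$, so for $K_0$ sufficiently large and $K>K_0$, $I+R^w_{K,l}$ is invertible on these spaces via Neumann series with operator norm of the inverse smaller than $2$, producing the right-inverse $(a^{-1/2}_{K,l})^w(I+R^w_{K,l})^{-1}$. The symmetric computation $(a^{-1/2}_{K,l})^w(a^{1/2}_{K,l})^w=I+\tilde R^w_{K,l}$ gives the left-inverse. Together with the mapping $(a^{-1/2}_{K,l})^w:H(c)\to H(m^{1/2}c)$ coming from $a^{-1/2}_{K,l}\in S(m^{-1/2})$, this yields the form \eqref{inverse_equation_2}.

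The only delicate point is the first paragraph: one must check that the Faà di Bruno expansion of $\partial^\alpha_\eta(a^{\pm 1/2}_{K,l})$ never loses the $K^{-\kappa}$ smallness, since each $\eta$-derivative falling on $a_{K,l}$ contributes $K^{-\kappa}$ while the square-root prefactor $a^{-(2j-1)/2}_{K,l}$ only contributes powers of $m^{-1/2}_{K,l}$ in the weight and no powers of $K$. Once this bookkeeping is in place, part (2) is a direct transcription of the argument in Lemma \ref{inverse_lemma}(2).
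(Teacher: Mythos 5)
Your proposal is correct and follows essentially the same route as the paper: the chain/Leibniz-rule symbol estimates for $a^{\pm1/2}_{K,l}$ with the $K^{-\kappa}$ gain on $\eta$-derivatives, then the Weyl composition $(a^{1/2}_{K,l})^w(a^{-1/2}_{K,l})^w=I+R^w_{K,l}$ with $K^\kappa R_{K,l}$ bounded in $S(1)$ and a Neumann series giving left and right inverses of norm smaller than $2$. The paper itself omits the second step as "exactly the same as Lemma \ref{inverse_lemma}", and your reconstruction of it (including the bookkeeping remark on higher derivatives) matches that intended argument.
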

\begin{proof}
  Firstly by assumption on $a$ and $l$, $a_{K,l}\in S(m_{K,l})$ uniformly in $K$.
  Similar to lemma \ref{inverse_lemma}, we have for any $1\le j\le d$,
  \begin{align*}
    \partial_{\eta_j}a^{1/2}_{K,l} = \frac{\partial_{\eta_j}a_{K,l}}{2a^{1/2}_{K,l}},\
    |\partial_{\eta_j}a^{1/2}_{K,l}| = \frac{|\partial_{\eta_j}a_{K,l}|}{2|a^{1/2}_{K,l}|} \lesssim K^{-\kappa}m^{1/2}_{K,l}.
  \end{align*}
  Estimate on higher derivative follows from Leibniz formula and thus we have
  $\partial_{\eta} a^{1/2}_{K,l}\in S(K^{-\kappa}m^{1/2}_{K,l})$ uniformly in $K$.
  Similarly, $\partial_{\eta} a^{-1/2}_{K,l} \in S(K^{-\kappa}m^{-1/2}_{K,l})$,
  $\partial_{v}a^{1/2}_{K,l}\in S(m^{1/2}_{K,l})$ and $\partial_{v} a^{-1/2}_{K,l}\in S(m^{1/2}_{K,l})$ uniformly in $K$.

  By composition formula of Weyl quantization,
  \begin{align*}
    (a^{1/2}_{K,l})^w(a^{-1/2}_{K,l})^w=I+R^w_{K,l},
  \end{align*}where
  \begin{align*}
    R_{K,l}=\int^1_0(\partial_{v}a^{1/2}_{K,l}\#_\theta \partial_{\eta} a^{-1/2}_{K,l}-\partial_{\eta} a^{1/2}_{K,l}\#_\theta \partial_{v} a^{-1/2}_{K,l})\,d\theta.
  \end{align*}
  Thus the following argument is exactly the same as lemma \ref{inverse_lemma} and we omit them.
\end{proof}

\begin{Lem}\label{inverse_bounded_lemma}Let $m$ be $\Gamma$-admissible weight such that $a\in S(m)$.
  Assume $a^w:H(mc)\to H(c)$ is invertible.
    If $b\in S(m)$, then there exists $C>0$, depending only on the seminorms of $a$ and $b$, such that for $f\in H(mc)$,
    \begin{align}
      \|b(v,D_v)f\|_{H(c)}+\|b^w(v,D_v)f\|_{H(c)}\le C\|a^w(v,D_v)f\|_{H(c)}.
    \end{align}
\end{Lem}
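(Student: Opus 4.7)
The plan is to combine the Sobolev mapping property of pseudo-differential operators with symbols in $S(m)$ with the bounded inverse theorem applied to $a^w$. The crucial observation is that invertibility of $a^w$ supplies a reverse continuity estimate: since $a\in S(m)$ implies (via the standard Sobolev mapping theorem for Weyl quantization recalled in the appendix) that $a^w:H(mc)\to H(c)$ is bounded, the assumed invertibility together with the bounded inverse theorem yields $(a^w)^{-1}\in L(H(c),H(mc))$, and hence a constant $C_1>0$ such that
\begin{align*}
  \|f\|_{H(mc)} \le C_1\,\|a^w f\|_{H(c)}, \qquad f\in H(mc).
\end{align*}

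Next I would bound each of the two terms on the left-hand side of the desired inequality by $\|f\|_{H(mc)}$. For the Weyl term, the hypothesis $b\in S(m)$ and the same Sobolev mapping theorem give $b^w\in L(H(mc),H(c))$ with operator norm controlled by finitely many seminorms of $b$ in $S(m)$. For the standard (left) quantization $b(v,D_v)$ I would invoke the standard-to-Weyl conversion: there exists a symbol $\tilde{b}\in S(m)$, whose seminorms are controlled by those of $b$, such that $b(v,D_v)=\tilde{b}^{w}$. Applying the Sobolev mapping theorem to $\tilde b$ yields $b(v,D_v)\in L(H(mc),H(c))$ with operator norm controlled by seminorms of $b$.

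Putting the two estimates together gives, for some constant $C_2$ depending only on seminorms of $b$,
\begin{align*}
  \|b(v,D_v)f\|_{H(c)} + \|b^w f\|_{H(c)} \le C_2\,\|f\|_{H(mc)} \le C_1 C_2\,\|a^w f\|_{H(c)},
\end{align*}
which is the claimed inequality with $C=C_1C_2$.

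The only technically delicate ingredient is the standard-to-Weyl conversion, since one must verify that the resulting symbol $\tilde b$ again lies in $S(m)$ with controlled seminorms; this is where I expect the main (but routine) effort to go, by expanding the asymptotic difference between the two quantizations. Every other step is purely soft functional analysis sitting on top of the basic $S(m)\to L(H(mc),H(c))$ boundedness and the bounded inverse theorem.
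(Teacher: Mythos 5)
Your proof is correct, but it exploits the invertibility of $a^w$ differently from the paper. The paper stays inside the Weyl calculus: invoking Corollary 2.6.28 in \cite{Lerner2010} (the equivalence, recalled in the appendix, between invertibility of $a^w:H(mc)\to H(c)$ and the existence of an inverse symbol), it produces $a_{-1}\in S(m^{-1})$ with $a\#a_{-1}=a_{-1}\#a=1$, and then writes $b^w=(b\#a_{-1})^w a^w$ on $H(mc)$, where $b\#a_{-1}\in S(1)$ is bounded on $H(c)$; the standard quantization is handled exactly as you do, via $b(v,D_v)=(J^{-1/2}b)^w$ with $J^{-1/2}b\in S(m)$. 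You instead use only soft functional analysis: $a^w\in L(H(mc),H(c))$ plus bijectivity and the bounded inverse theorem give $\|f\|_{H(mc)}\le C_1\|a^wf\|_{H(c)}$, and then the mapping property $b^w,\ (J^{-1/2}b)^w\in L(H(mc),H(c))$ finishes the argument. Your route is shorter and avoids the symbolic-inverse machinery; the paper's route expresses the constant through seminorms of $b\#a_{-1}$, which meshes with the explicit inverse formula \eqref{inverse_equation} from Lemma \ref{inverse_lemma} when uniformity in the parameters $K$ and $\varepsilon$ is needed later (as in Lemma \ref{bound_varepsilon}), though your operator-norm constant $C_1\,C_2$ can be tracked uniformly there as well since $\|(a^w_{K,l})^{-1}\|$ is controlled by \eqref{inverse_equation}. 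One caveat, which affects both proofs equally and is really a looseness in the statement: the constant is not determined by the seminorms of $a$ and $b$ alone, since it also involves the inverse (its operator norm in your version, the seminorms of $a_{-1}$ in the paper's), so the stated dependence should be read as including that quantity; also note that your ``technically delicate'' standard-to-Weyl step is already supplied by the appendix, where $J^{t}$ is stated to be an isomorphism of $S(m,\Gamma)$.
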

\begin{proof}
  Applying Corollary 2.6.28 in \cite{Lerner2010}, we know that there exists $a_{-1}\in S(m^{-1})$ such that
  $a\#b=b\#a=1$. Thus $a_{-1}^wa^w=I$ on $H(mc)$.
  Since $b\in  S(m)$, we have $b\#a_{-1}\in S(1)$
  and hence $b^wa_{-1}^w$ is a linear bounded operator on $H(c)$. Thus
  \begin{align*}
    b^w = b^wa_{-1}^w a^w \quad\text{on $H(mc)$,}
  \end{align*}and so for $f\in H(mc)$,
  \begin{align*}
    \|b^w(v,D_v)f\|_{H(c)}\le C_{K,l}\|a^w(v,D_v)f\|_{H(c)}.
  \end{align*}
  On the other hand, $b(v,D_v)=(J^{-1/2}b)^w$ and $J^{-1/2}b\in S(m)$, thus $b(v,D_v)$ has the same bound as $b^w(v,D_v)$.

  %   Since $b\in S(\varepsilon a_{K,l}+\varepsilon^{-1}l)\subset S(\varepsilon m_{K+\varepsilon^{-2},l})$, $a^{-1}_{K,l}\in S(m^{-1}_{K,l})$, we have $\varepsilon^{-1}b\#a^{-1}_{K+\varepsilon^{-2},l}\in S(1)$
  %   and hence $\varepsilon^{-1}b^w(a^{-1}_{K+\varepsilon^{-2},l})^w$ is a linear bounded operator on $H(c)$. Thus
  %   \begin{align}
  %     b^w = \varepsilon^{-1}b^w(a^{-1}_{K+\varepsilon^{-2},l})^wG_{2,K,l} a^w_{K+\varepsilon^{-2},l}\varepsilon,
  %   \end{align}and so
  %   \begin{align}
  %     \|b^w(v,D_v)f\|_{H(c)}&\le C_{K,l}\varepsilon\|a^w_{K+\varepsilon^{-2},l}(v,D_v)f\|_{H(c)}\\
  %     &\le C_{K,l,d}(\varepsilon\|a_{K,l}^w(v,D_v)f\|_{H(c)}+\varepsilon^{-1}\|lf\|_{H(c){\Rd}}).
  %   \end{align}
  %   On the other hand, $b(v,D_v)=(J^{-1/2}b)^w$ and $J^{-1/2}b$ has the same symbol as $b$, thus $b$ has the same bound as $b^w$.
\end{proof}

In lemma \ref{inverse_lemma}, we obtained that $(a_{K+\varepsilon^{-(1+\kappa)},l})^w$ is invertible for sufficiently large $K$. Hence the following corollary is a similar result to lemma \ref{inverse_bounded_lemma} but the proof is slightly different, since $b$ belongs to a different symbol class and we need the constant to be independent of $\varepsilon$.

\begin{Lem}\label{bound_varepsilon}
  Assume $a\in S(m)$, $\partial_\eta (a_{K,l})\in S(K^{-\kappa}m_{K,l})$ uniformly in $K$ and
  $a_{K,l}\gtrsim m_{K,l}$.
  Let $\rho>0$ and $b\in S(\varepsilon m_{K,l}+\varepsilon^{-\rho}l)$, uniformly in $\varepsilon\in(0,1)$. Then there exists $K_0>0$, such that for $f\in H(mc)$, $\varepsilon\in(0,1)$,
      \begin{align}
        \|b(v,D_v)f\|_{H(c)}+\|b^w(v,D_v)f\|_{H(c)}\le C_{K,l}\left(\varepsilon\|a^w(v,D_v)f\|_{H(c)}+\varepsilon^{-\rho}\|l^wf\|_{H(c)}\right).
      \end{align}
\end{Lem}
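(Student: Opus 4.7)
The plan is to combine the ellipticity from Lemma \ref{inverse_lemma} with a direct $H(wc)\to H(c)$ boundedness argument for $b^w$, and then to bookkeep the $\varepsilon$- and $K$-dependence. The main idea is to keep the weight $w:=\varepsilon m_{K,l}+\varepsilon^{-\rho}l$ intact rather than factoring $b^w$ through $(a^w_{K,l})^{-1}$, so that the linearity of $w$ can be exploited to separate the two $\varepsilon$-scalings cleanly.

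First, since the hypotheses on $a$ are exactly those of Lemma \ref{inverse_lemma}, there is $K_0>0$ such that for every $K>K_0$ the operator $a^w_{K,l}:H(m_{K,l}c)\to H(c)$ is invertible (note $l\lesssim m$ forces $H(m_{K,l}c)=H(mc)$ with equivalent norms). This yields the a priori estimate
\begin{align*}
\|m^w_{K,l}f\|_{H(c)}\approx\|f\|_{H(m_{K,l}c)}\lesssim\|a^w_{K,l}f\|_{H(c)}\le\|a^wf\|_{H(c)}+K\|l^wf\|_{H(c)},
\end{align*}
where the first equivalence is the standard identification $\|p^wf\|_{H(c)}\approx\|f\|_{H(pc)}$ valid for any $\Gamma$-admissible weight $p$.

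Second, as a positive combination of two $\Gamma$-admissible weights, $w$ is itself admissible with slow-variation and temperance constants independent of $\varepsilon\in(0,1)$. Combined with $b\in S(w)$ uniformly in $\varepsilon$, the standard Weyl continuity theorem (in the spirit of Remark 3.4 of \cite{Beals1981} used in the proof of Lemma \ref{inverse_lemma}) gives $\|b^wf\|_{H(c)}\lesssim\|f\|_{H(wc)}$ with constant independent of $\varepsilon$. Expanding the right-hand side via the linearity of $w$,
\begin{align*}
\|f\|_{H(wc)}=\|\varepsilon(m_{K,l}c)^wf+\varepsilon^{-\rho}(lc)^wf\|_{L^2}\le\varepsilon\|f\|_{H(m_{K,l}c)}+\varepsilon^{-\rho}\|f\|_{H(lc)},
\end{align*}
and then using $\|f\|_{H(lc)}\approx\|l^wf\|_{H(c)}$ together with the estimate from the first step yields
\begin{align*}
\|b^wf\|_{H(c)}\lesssim\varepsilon\|a^wf\|_{H(c)}+(K\varepsilon+\varepsilon^{-\rho})\|l^wf\|_{H(c)}.
\end{align*}
Since $\varepsilon\le 1\le\varepsilon^{-\rho}$, one has $K\varepsilon\le K\varepsilon^{-\rho}$, which produces the stated inequality with $C_{K,l}=C(K+1)$. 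For the standard quantization one writes $b(v,D_v)=(J^{-1/2}b)^w$, observes $J^{-1/2}b\in S(w)$ with equivalent seminorms, and applies the same argument.

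The main technical point is ensuring that every constant in the chain---admissibility moduli of $w$, the $S(w)$-seminorms of $b$, the operator norm of $b^w:H(wc)\to H(c)$, and the identification $\|f\|_{H(pc)}\approx\|p^wf\|_{H(c)}$---is genuinely independent of $\varepsilon$; this reduces to the fact that rescaling an admissible weight by a positive constant does not alter its admissibility moduli. The naive alternative of estimating $\|b^w(a^w_{K,l})^{-1}\|_{L(H(c))}$ directly from the composition $b\#a^{-1}_{K,l}\in S(\varepsilon+\varepsilon^{-\rho}/K)$ produces the spurious term $\varepsilon^{-\rho}K^{-1}\|a^wf\|_{H(c)}$, which cannot be absorbed into $\varepsilon\|a^wf\|_{H(c)}$ for fixed $K$; keeping $w$ intact and using its linearity is precisely what avoids this loss.
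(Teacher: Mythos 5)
Your argument has a genuine gap at the point where you convert $\varepsilon^{-\rho}\|f\|_{H(lc)}$ into $\varepsilon^{-\rho}\|l^wf\|_{H(c)}$. The ``standard identification'' $\|f\|_{H(pc)}\approx\|p^wf\|_{H(c)}$ is not valid for an arbitrary $\Gamma$-admissible weight $p$: the easy direction $\|p^wf\|_{H(c)}\lesssim\|f\|_{H(pc)}$ holds when $p\in S(p)$, but the direction you need, $\|f\|_{H(lc)}\lesssim\|l^wf\|_{H(c)}$, is a lower bound, i.e. it amounts to invertibility of $l^w:H(lc)\to H(c)$, and nothing in the hypotheses ($l$ admissible, $l\in S(l)$, $l\lesssim m$) provides it; the general theory only guarantees that \emph{some} symbol in $S(l)$ has an invertible quantization, not the weight itself. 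This is exactly the difficulty that Lemma \ref{inverse_lemma} is designed to overcome by adding $Kl$ with $K$ large and assuming smallness of the $\eta$-derivatives, and no such hypothesis is made for $l$ here. A related but repairable misstep is the identity $\|f\|_{H(wc)}=\|\varepsilon(m_{K,l}c)^wf+\varepsilon^{-\rho}(lc)^wf\|_{L^2}$: the $H(M)$-norm is not defined as $\|M^wf\|_{L^2}$ (and $m$ is not assumed to be a symbol; only $a\in S(m)$ is), though the inequality you actually need there, $\|f\|_{H(wc)}\lesssim\varepsilon\|f\|_{H(m_{K,l}c)}+\varepsilon^{-\rho}\|f\|_{H(lc)}$, does follow from the pointwise bound on the weights and the intrinsic partition-of-unity definition; but this only pushes the problem onto the $l^w$ step above.

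The paper's proof avoids any lower bound for $l^w$, and it is essentially the ``naive alternative'' you dismiss, made to work by letting $K$ depend on $\varepsilon$: since $\varepsilon m_{K,l}+\varepsilon^{-\rho}l=\varepsilon\, m_{K+\varepsilon^{-1-\rho},l}$, one has $b\in S(\varepsilon m_{K+\varepsilon^{-1-\rho},l})$ uniformly in $\varepsilon$, and Lemma \ref{inverse_lemma} is uniform in $K$, so $(a^w_{K+\varepsilon^{-1-\rho},l})^{-1}=(a^{-1}_{K+\varepsilon^{-1-\rho},l})^wG_{2,K+\varepsilon^{-1-\rho},l}$ exists with operator norm bounded independently of $\varepsilon$, and $\varepsilon^{-1}b\#a^{-1}_{K+\varepsilon^{-1-\rho},l}\in S(1)$ uniformly. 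Writing $b^w=\bigl(\varepsilon^{-1}b^w(a^{-1}_{K+\varepsilon^{-1-\rho},l})^wG_{2,K+\varepsilon^{-1-\rho},l}\bigr)\,\varepsilon\,a^w_{K+\varepsilon^{-1-\rho},l}$ then gives $\|b^wf\|_{H(c)}\lesssim\varepsilon\|a^w_{K+\varepsilon^{-1-\rho},l}f\|_{H(c)}\le\varepsilon\|a^wf\|_{H(c)}+(K\varepsilon+\varepsilon^{-\rho})\|l^wf\|_{H(c)}$ by the triangle inequality alone, which is the stated estimate; the $\varepsilon^{-\rho}K^{-1}\|a^wf\|$ loss you worry about never appears because the inverse is taken at level $K+\varepsilon^{-1-\rho}$, not at fixed $K$. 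To salvage your route you would either need to add an invertibility hypothesis on $l^w$ (true in the paper's application, where $l=\<v\>^{\gamma+2s}$ acts by multiplication, but not part of the general lemma) or switch to this $\varepsilon$-dependent-$K$ factorization.
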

\begin{proof}
  From lemma \ref{inverse_lemma}, we have $a^{-1}_{K,l}\in S(m^{-1}_{K,l})$ for $K>1$, and there exists $K_0>1$ such that for $K>K_0$,
  \begin{align}
    (a^w_{K,l})^{-1} = (a^{-1}_{K,l})^wG_{2,K,l},
  \end{align}with $G_{2,K,l}\in L(H(c))$.
  Since $b\in S(\varepsilon m_{K+\varepsilon^{-1-\rho},l})$, we have $\varepsilon^{-1} b\#a^{-1}_{K+\varepsilon^{-1-\rho},l}\in S(1)$, uniformly in $\varepsilon$.
  Write
  \begin{align}
    b^w=\underbrace{\varepsilon^{-1} b^w(a^{-1}_{K+\varepsilon^{-1-\rho},l})^wG_{2,K,l}}_{\text{bounded}}\,\varepsilon\,(a_{K+\varepsilon^{-1-\rho},l})^w,
  \end{align} then
  \begin{align}
    \|b^w(v,D_v)f\|_{H(c)}\le C_{K,l,d}\,\varepsilon\|a_{K+\varepsilon^{-1-\rho},l}^w(v,D_v)f\|_{H(c)}.
  \end{align}
  Similar to the previous lemma, applying that $b(v,D_v)=(J^{-1/2}b)^w$ and $J^{-1/2}b\in S(\varepsilon m_{K,l}+\varepsilon^{-\kappa}l)$, we have $b(v,D_v)$ has the same bound as $b^w$.
\end{proof}

For $k,n\in\R$, it's trivial to obtain that $\<v\>^{n}$ and $\<D_v\>^k$, as Weyl quantization, are invertible, since $\<v\>^n$ is only a multiplication while $\<D_v\>^k$ is a multiplier.
Recall
\begin{align}
  c = \<\eta\>^k\<v\>^n.
\end{align}Then $c\in S(\<\eta\>^k\<v\>^n)$, $\partial_v c\in S(\<\eta\>^k\<v\>^{n-1})$, $\partial_\eta c\in S(\<\eta\>^{k-1}\<v\>^{n})$ and we have the following useful corollary.
There are many ways to prove this corollary, here we provide one by applying the above lemmas.
\begin{Coro}\label{equaivlent_coro}Let $k,n\in\R$, then we have the equivalence
  \begin{align}
    \|c^w(v,D_v)f\|_{L^2}\approx\|\<v\>^n\<D_v\>^kf\|_{L^2}\approx \|\<D_v\>^k\<v\>^nf\|_{L^2},
  \end{align}and hence these norms are equivalent on $H^k_n$.
\end{Coro}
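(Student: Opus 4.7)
The plan is to recognize each of the three operators as the Weyl quantization of a symbol in $S(c)$, establish invertibility of each as a map $H(c)\to L^2$, and then apply Lemma \ref{inverse_bounded_lemma} in both directions to compare their $L^2$ norms.

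First, since $\<v\>^n\in S(\<v\>^n)$ and $\<\eta\>^k\in S(\<\eta\>^k)$ are the Weyl symbols of multiplication by $\<v\>^n$ and of the Fourier multiplier $\<D_v\>^k$ respectively, the Weyl composition formula gives $\<v\>^n\<D_v\>^k=(\<v\>^n\#\<\eta\>^k)^w$ and $\<D_v\>^k\<v\>^n=(\<\eta\>^k\#\<v\>^n)^w$, with both composed symbols lying in $S(\<v\>^n\<\eta\>^k)=S(c)$. Together with $c^w$ itself, the three operators are thus exhibited as Weyl quantizations $a_i^w$ with $a_i\in S(c)$ for $i=1,2,3$. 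The operators $A_2:=\<v\>^n\<D_v\>^k$ and $A_3:=\<D_v\>^k\<v\>^n$ are invertible as maps $H(c)\to L^2$ with explicit inverses $\<D_v\>^{-k}\<v\>^{-n}$ and $\<v\>^{-n}\<D_v\>^{-k}$, obtained by composing the inverse multiplication operator with the inverse Fourier multiplier. For $c^w$, the symbol $c$ is positive and $\Gamma$-admissible, hence elliptic in $S(c)$; the elliptic parametrix (Corollary 2.6.28 of Lerner, already invoked in the proof of Lemma \ref{inverse_bounded_lemma}) then produces $c_{-1}\in S(c^{-1})$ with $c\#c_{-1}=c_{-1}\#c=1$, so $c^w$ is invertible with inverse $c_{-1}^w$.

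Next, I would apply Lemma \ref{inverse_bounded_lemma} taking its ambient background weight to be the constant $1$ (so that the conclusion concerns the $L^2$ norm and the domain $H(mc)$ becomes $H(c)$) and taking its $m$ to be the present $c$. For any ordered pair $(a_i,a_j)$ from our three symbols, invertibility of $a_i^w$ together with $a_j\in S(c)$ yields $\|a_j^w f\|_{L^2}\le C\|a_i^w f\|_{L^2}$ for $f\in H(c)$; swapping the roles of $i$ and $j$ gives the reverse inequality, and the claimed three-way equivalence of norms follows.

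The main obstacle will be establishing invertibility of $c^w$: the hypotheses of Lemma \ref{inverse_lemma} are not transparently satisfied for $a=c$ under natural choices of $l$, since $\partial_\eta c\in S(\<\eta\>^{-1}c)$ and forcing this estimate into $S(K^{-\kappa}c_{K,l})$ uniformly in $K$ would require a delicate selection of $l$. Invoking the general elliptic parametrix from Lerner's book sidesteps this issue cleanly.
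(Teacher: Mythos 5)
Your overall scheme --- viewing all three operators as Weyl quantizations with symbols in $S(c)$, inverting $\<v\>^n\<D_v\>^k$ and $\<D_v\>^k\<v\>^n$ explicitly, and comparing any two of the three norms through Lemma \ref{inverse_bounded_lemma} applied with background weight $1$ --- is sound and is essentially the paper's own strategy (the paper reaches the same comparisons by conjugating with $\<v\>^{-n}$, resp.\ $\<D_v\>^{-k}$, and applying Lemma \ref{inverse_bounded_lemma} with $m=\<\eta\>^k$, resp.\ $m=\<v\>^n$). The genuine gap is the invertibility of $c^w:H(c)\to L^2$, which is exactly what the substantive inequality $\|\<D_v\>^k\<v\>^nf\|_{L^2}\lesssim\|c^wf\|_{L^2}$ requires, and your way of dispatching it does not work. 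Corollary 2.6.28 of \cite{Lerner2010}, in the form it is used in the proof of Lemma \ref{inverse_bounded_lemma} and recalled in the appendix, passes from the invertibility of $a^w$ \emph{as an operator} to the existence of an exact inverse symbol; it is not an ``ellipticity implies exact parametrix'' statement. Ellipticity cannot substitute for that hypothesis here, because the metric $\Gamma=|dv|^2+|d\eta|^2$ has no gain: positivity of $c$ together with $c\in S(c)$, $c^{-1}\in S(c^{-1})$ only gives $c\#c^{-1}=1+r$ with $r\in S(\<v\>^{-1}\<\eta\>^{-1})$, a remainder that is bounded but in no way small, so no Neumann series is available and no exact inverse symbol is produced; in a gain-free calculus of type $S^0_{0,0}$ ellipticity by itself yields no invertibility (nor even a Fredholm) conclusion. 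For this particular $c$ the remainder does decay, so $r^w$ is compact and one can get a Fredholm statement, but one would still have to exclude a kernel --- positivity of a symbol does not give positivity, or injectivity, of its Weyl quantization for free.

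So your proposal delivers the (correct) bounds $\|c^wf\|_{L^2}\lesssim\|\<D_v\>^k\<v\>^nf\|_{L^2}\approx\|\<v\>^n\<D_v\>^kf\|_{L^2}$, but the converse bound for $c^w$ is left unproved. The paper closes this step by applying Lemma \ref{inverse_lemma} with $m=l=c$, so that $c_{K,l}=(K+1)c$ is inverted in the form \eqref{inverse_equation}, and then invoking Lemma \ref{inverse_bounded_lemma} once more; note that with $m=l$ the parameter $K$ only rescales $c$, so the $K^{-\kappa}$-smallness hypothesis there indeed deserves exactly the scrutiny you give it. In other words, your worry is aimed at the right place, but the correct conclusion is that this invertibility must be \emph{proved} --- either by verifying the hypotheses of Lemma \ref{inverse_lemma} for a suitable choice of $l$, or by an independent argument (for instance a finite-order parametrix giving $c^w=(I+r^w)\<D_v\>^k\<v\>^n$ with $r^w$ compact, combined with an injectivity argument, or an induction reducing to the explicitly invertible $\<D_v\>^k\<v\>^n$) --- not that it can be bypassed by quoting an ellipticity statement that the cited corollary does not contain.
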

\begin{proof}
  The symbols of $\<v\>^n\<D_v\>^k\<v\>^{-n}$ and $c^w\<v\>^{-n}$ belong to $S(\<\eta\>^k)$.
  Letting $m = \<\eta\>^k$ in lemma \ref{inverse_bounded_lemma}, we find that for $f\in H(\<\eta\>^k)$,
  \begin{align*}
    \|\<v\>^n\<D_v\>^k\<v\>^{-n}f\|_{L^2} &\lesssim\|\<D_v\>^kf\|_{L^2},\\
    \|c^w(v,D_v)\<v\>^{-n}f\|_{L^2} &\lesssim\|\<D_v\>^kf\|_{L^2}.
  \end{align*}
  So for any $f$ such that $\|\<D_v\>^k\<v\>^nf\|_{L^2}<\infty$,
  \begin{align*}
    \|\<v\>^n\<D_v\>^kf\|_{L^2} &\lesssim \|\<D_v\>^k\<v\>^nf\|_{L^2},\\
    \|c^w(v,D_v)f\|_{L^2} &\lesssim \|\<D_v\>^k\<v\>^nf\|_{L^2}.
  \end{align*}
  Similarly, the symbol of $\<D_v\>^k\<v\>^n\<D_v\>^{-k}$ belonges to $S(\<v\>^n)$.
  Letting $m = \<v\>^n$ in lemma \ref{inverse_bounded_lemma}, we find that for any $f\in L^2_x$,
  \begin{align*}
    \|\<D_v\>^k\<v\>^n\<D_v\>^{-k}f\|_{L^2} &\lesssim \|\<v\>^nf\|_{L^2},
  \end{align*}and for $f$ such that $\|\<v\>^n\<D_v\>^kf\|_{L^2}<\infty$,
  \begin{align*}
    \|\<D_v\>^k\<v\>^nf\|_{L^2} &\lesssim \|\<v\>^n\<D_v\>^kf\|_{L^2}.
  \end{align*}So we proved the second equivalence.
  Besides, the symbol $c$ satisfies lemma \ref{inverse_lemma} with $m=l=\<\eta\>^k\<v\>^n$. Thus there exists $K>1$ such that $c^w_{K,l}=(K+1)c^w:H(\<\eta\>^k\<v\>^n)\to L^2$ is invertible in the form of \eqref{inverse_equation}.
  Thus applying lemma \ref{inverse_bounded_lemma} and $\<D_v\>^k\<v\>^n\in S(\<\eta\>^k\<v\>^n)$, we have
  \begin{align*}
    \|\<D_v\>^k\<v\>^nf\|_{L^2}\lesssim \|c^w(v,D_v)f\|_{L^2}.
  \end{align*}
\end{proof}

% \begin{Coro}\label{equivalent_coro}
%   Let $m$, $a$ be $\Gamma$-admissible weights such that $a\approx m$. Assume $a,m\in S(m)$ and $a,m:H(mc)\to H(c)$ are invertible. Then for $f\in \S$,
%   \begin{align}
%     \|a^wf\|_{H(c)}\approx \|m^wf\|_{H(c)}.
%   \end{align}
% \end{Coro}

% No need for equivalent.a and m

Here we give a version of G{\aa}rding's inequality, which is needed in the next section.
\begin{Thm}\label{Dissipation_theoremL2}
  Assume $a\in S(m)$, $\partial_\eta (a_{K,l})\in S(K^{-\kappa}m_{K,l})$ uniformly in $K$ and $a_{K,l}\gtrsim m_{K,l}$.
  Then there exists $K_0>1$ such that for $K>K_0$, $f\in\S$, we have
  \begin{align}\label{equaivalent_1}
    \Re(a_{K,l}^w(v,D_v)f,f)_{L^2}\approx \|(a^{1/2}_{K,l})^wf\|_{L^2}^2,
  \end{align}
  If in addition, $b^{1/2}\in S(m^{1/2})$, then
    \begin{align}\label{Dissipation_L2}
      \Re(a^w(v,D_v)f,f)_{L^2}\ge \frac{1}{C}\|(b^{1/2})^w(v,D_v)f\|^2_{L^2} - C(l^w(v,D_v)f,f)_{L^2},
    \end{align}for some constant $C$ independent of $f$.
\end{Thm}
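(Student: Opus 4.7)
The plan is to factor $a_{K,l}^w$ through its self-adjoint square root via Weyl composition, and to quantify the resulting defect as an $L^2$-small operator that can be absorbed for $K$ large.

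First, I would apply Lemma \ref{inverse_lemma_2} to obtain $a^{1/2}_{K,l}\in S(m^{1/2}_{K,l})$ with $\partial_\eta a^{1/2}_{K,l}\in S(K^{-\kappa}m^{1/2}_{K,l})$ uniformly in $K$, and for $K$ large the invertibility of $A:=(a^{1/2}_{K,l})^w:H(m^{1/2}c)\to H(c)$. Since $a_{K,l}\gtrsim m_{K,l}>0$ forces $a^{1/2}_{K,l}$ to be real-valued, $A$ is $L^2$-self-adjoint. The Weyl composition formula then gives
\begin{align*}
A^2=(a^{1/2}_{K,l}\#a^{1/2}_{K,l})^w=a_{K,l}^w+R_{K,l}^w,
\end{align*}
where $R_{K,l}=a^{1/2}_{K,l}\#a^{1/2}_{K,l}-a_{K,l}$. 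The leading correction $\{a^{1/2}_{K,l},a^{1/2}_{K,l}\}$ vanishes by antisymmetry of the Poisson bracket, so every surviving contribution to $R_{K,l}$ carries at least two $\eta$-derivatives of $a^{1/2}_{K,l}$. Using the integral $\#_\theta$ representation from the proof of Lemma \ref{inverse_lemma_2}, this yields $R_{K,l}\in S(K^{-2\kappa}m_{K,l})$ uniformly in $K$.

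Next, I would sandwich the remainder. Because $A^{-1}$ factors through $(a^{-1/2}_{K,l})^w$ (Lemma \ref{inverse_lemma_2}), the operator $A^{-1}R_{K,l}^wA^{-1}$ has a symbol that, modulo the uniformly-bounded factors $F_{1,K,l},F_{2,K,l}$, reduces to $a^{-1/2}_{K,l}\# R_{K,l}\# a^{-1/2}_{K,l}\in S(K^{-2\kappa})$, so Calder\'on--Vaillancourt gives $\|A^{-1}R_{K,l}^wA^{-1}\|_{L(L^2)}\le CK^{-2\kappa}$. Testing $A^2=a_{K,l}^w+R_{K,l}^w$ against $f\in\S$, using self-adjointness of $A$ and taking real parts:
\begin{align*}
\|Af\|_{L^2}^2=(a_{K,l}^wf,f)_{L^2}+(R_{K,l}^wf,f)_{L^2},\qquad |(R_{K,l}^wf,f)_{L^2}|\le CK^{-2\kappa}\|Af\|_{L^2}^2.
\end{align*}
Choosing $K_0$ so that $CK_0^{-2\kappa}\le 1/2$ gives the equivalence \eqref{equaivalent_1} for all $K>K_0$. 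For the second statement, since $b^{1/2}\in S(m^{1/2})\subset S(m^{1/2}_{K,l})$, Lemma \ref{inverse_bounded_lemma} applied with weight $m^{1/2}_{K,l}$ and the invertible operator $A$ yields $\|(b^{1/2})^wf\|_{L^2}\lesssim\|Af\|_{L^2}$. Combining this with \eqref{equaivalent_1} and the decomposition $a^w=a_{K,l}^w-Kl^w$ immediately yields \eqref{Dissipation_L2} with $C$ of order the fixed~$K$.

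The main obstacle is the quantitative remainder estimate. Without the vanishing of $\{a^{1/2}_{K,l},a^{1/2}_{K,l}\}$, one would only see a single $K^{-\kappa}$ gain and $R_{K,l}$ would sit in $S(K^{-\kappa}m_{K,l})$; the sandwich argument would then not improve beyond $O(K^{-\kappa})$ and the absorption would still work, but the cleaner $K^{-2\kappa}$ identified here comes precisely from the symmetry $a=b$ in $a\#a$. The bookkeeping needed to pass from the formal expansion to the genuine $\#_\theta$ remainder bound reuses the same seminorm estimates as Lemma \ref{inverse_lemma_2}, which is where the main technical effort is concentrated.
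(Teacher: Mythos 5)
Your proposal is correct and follows essentially the same route as the paper: compose $(a^{1/2}_{K,l})^w$ with itself via the Weyl calculus, sandwich the remainder between two factors of $(a^{1/2}_{K,l})^w$ using the inverse formula of Lemma \ref{inverse_lemma_2}, absorb it for $K$ large to get \eqref{equaivalent_1}, then control $(b^{1/2})^w$ by Lemma \ref{inverse_bounded_lemma} and split $a^w=a^w_{K,l}-Kl^w$ to conclude \eqref{Dissipation_L2}. The one caveat is your claimed $S(K^{-2\kappa}m_{K,l})$ bound on the remainder: the hypotheses only give $\partial_\eta a^{1/2}_{K,l}\in S(K^{-\kappa}m^{1/2}_{K,l})$, so a second $\eta$-derivative landing on the same factor (e.g.\ the term $\partial^2_\eta a^{1/2}_{K,l}\,\partial^2_v a^{1/2}_{K,l}$) gains only one factor of $K^{-\kappa}$; the correct class is $S(K^{-\kappa}m_{K,l})$, which — as you yourself note — still suffices for the absorption and is exactly what the paper uses.
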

\begin{proof}
  Notice $a$ satisfies the assumption of lemma \ref{inverse_lemma_2},
  thus there exists $K_0>1$ such that for $K>K_0$, $(a^{1/2}_{K,l})^w: H(m^{1/2})\to L^2$ is invertible with formula \eqref{inverse_equation_2}. Hence by lemma \ref{inverse_bounded_lemma} and $b^{1/2}\in S(m^{1/2})$, we have for $f\in H(m^{1/2})$,
  \begin{align}\label{tempeq1}
    \|(b^{1/2})^wf\|_{L^2}\le C_K \|(a^{1/2}_{K,l})^wf\|_{L^2}.
  \end{align}

  On the other hand,
  \begin{align*}
    (a^{1/2}_{K,l})^w(a^{1/2}_{K,l})^w = a_{K,l}^w + R_{K,l}^w,
  \end{align*}with
  \begin{align*}
    R_{K,l}^w=\int^1_0(\partial_{v}a^{1/2}_{K,l}\#_\theta \partial_{\eta} a^{1/2}_{K,l}-\partial_{\eta} a^{1/2}_{K,l}\#_\theta \partial_{v} a^{1/2}_{K,l})\,d\theta.
  \end{align*}
  Similar to the proof in lemma \ref{inverse_lemma}, since $\partial_{v}a^{1/2}_{K,l}\in S(m^{1/2}_{K,l})$ and $\partial_{\eta}a^{1/2}_{K,l}\in S(K^{-\kappa}m^{1/2}_{K,l})$ uniformly in $K$,
  we have $\partial_{v}a^{1/2}_{K,l}\#_\theta \partial_{\eta} a^{1/2}_{K,l}$ and $ \partial_{\eta} a^{1/2}_{K,l}\#_\theta \partial_{v} a^{1/2}_{K,l}$ belong to $S(K^{-\kappa}m_{K,l})$ uniformly in $K$ and $\theta$.
  Hence $R_{K,l}\in S(K^{-\kappa}m_{K,l})$ uniformly in $K$. Using \eqref{inverse_equation_2}, 
  \begin{align*}
    R^w_{K,l} = K^{-\kappa}(a^{1/2}_{K,l})^wF_{1,K,l}\left(K^{\kappa}(a^{-1/2}_{K,l})^wR_{K,l}^w(a^{-1/2}_{K,l})^w\right)F_{2,K,l}(a^{1/2}_{K,l})^w,
  \end{align*}where $g^w:=K^{\kappa}(a^{-1/2}_{K,l})^wR_{K,l}^w(a^{-1/2}_{K,l})^w$ has symbol in $S(1)$ uniformly in $K$, hence is bounded on $L^2$. Recall that the norm of operators $F_{1,K,l}$ and $F_{2,K,l}$ are smaller than $2$. Then for $f\in\S$,
  \begin{align*}
    (R^w_{K,l}f,f)_{L^2}& = K^{-\kappa}\left(F_{1,K,l}g^wF_{2,K,l}(a^{1/2}_{K,l})^wf,(a^{1/2}_{K,l})^wf\right)_{L^2},\\
    |(R^w_{K,l}f,f)_{L^2}|&\le K^{-\kappa}C\|(a^{1/2}_{K,l})^wf\|_{L^2}^2.
  \end{align*}
  We choose $K_0$ sufficiently large such that for $K>K_0$,
  \begin{align*}
    |(R^w_{K,l}f,f)_{L^2}|\le \frac{1}{2}\|(a^{1/2}_{K,l})^wf\|^2_{L^2}.
  \end{align*}
  Then for $f\in\S$,
  \begin{align*}
    \|(a^{1/2}_{K,l})^wf\|^2_{L^2}& = (a_{K,l}^wf,f)_{L^2} +(R_{K,l}^wf,f)_{L^2},\\
    \|(a^{1/2}_{K,l})^wf\|^2_{L^2}&\approx \Re(a_{K,l}^wf,f)_{L^2}.
  \end{align*}
  Together with \eqref{tempeq1}, we get \eqref{Dissipation_L2}.
\end{proof}

Now we come to the proof of our main theorem \ref{Dissipation_Theorem}.

\begin{proof}[Proof of theorem \ref{Dissipation_Theorem}]

We claim that for any $k,n\in\R$, there exists constant $C_{k,n}>0$ such that for $\varepsilon>0$, $f\in\S$,
\begin{align}
  |(a^wf,f)_{H^k_n} - (a^wc^wf,c^wf)_{L^2}|\le \varepsilon\|(b^{1/2})^wc^wf\|_{L^2}^2 + C_{k,\varepsilon}\|(l^{1/2})^wc^wf\|^2_{L^2}.
\end{align}
Then letting $\varepsilon$ small, we have for $f\in\S$,
\begin{align*}
  \Re(a^wf,f)_{H^k_n}
  &\ge \Re(a^wc^wf,c^wf)_{L^2}-(\varepsilon\|(b^{1/2})^wc^wf\|_{L^2}^2 + C_{k,\varepsilon}\|(l^{1/2})^wc^wf\|^2_{L^2})\\
  &\ge \frac{1}{C}\|(b^{1/2})^wc^wf\|^2_{L^2}-\varepsilon\|(b^{1/2})^wc^wf\|_{L^2}^2 - C_{k,\varepsilon}\|(l^{1/2})^wc^wf\|^2_{L^2}\\
  &\ge \frac{1}{C'}\|(b^{1/2})^wc^wf\|^2_{L^2} - C_{k}\|(l^{1/2})^wc^wf\|^2_{L^2}.
\end{align*}
Proof of the claim: Notice
    \begin{align*}
      (a^w(v,D_v)f,f)_{H^k_n}
      &=(c^w(v,D_v)a^w(v,D_v)f,c^w(v,D_v)f)_{L^2}\\
      &=(a^w(v,D_v)c^w(v,D_v)f,c^w(v,D_v)f)_{L^2}+([c^w,a^w]f,c^wf)_{L^2}.
    \end{align*}
    So it suffices to control the last term.
  Since $b^{1/2}$ satisfies the assumptions of lemma \ref{inverse_lemma}, there exists $K_0>1$ such that for $K>K_0$, $(b^{1/2})_{K,l^{1/2}}\in S((m^{1/2})_{K,l^{1/2}})$ is invertible in the form of \eqref{inverse_equation} and $((b^{1/2})_{K,l^{1/2}})^{-1}\in S(((m^{1/2})_{K,l^{1/2}})^{-1})$.
  Noticing $c=\<\eta\>^k\<v\>^n$, $c^{-1}\in S(\<\eta\>^{-k}\<v\>^{-n})$, $\partial_\eta a\in S(\varepsilon m_{K,l}+\varepsilon^{-\rho}l)$,
  $m\<\eta\>^{-N}\lesssim l$ for some $N>0$
  and \eqref{varepsilon_inequality}, we have, for any $0<\varepsilon<1$,
  \begin{align*}
    &[c^w(v,D_v),a^w(v,D_v)]\in Op((\varepsilon m_{K,l}+\varepsilon^{-\delta}l)\<\eta\>^{k}\<v\>^n),\\
    % = (b\#a)^w-(a\#b)^w\\
    % &=(\int^1_0(\partial_vb\#_\theta \partial_\eta (a_{K,l})-\partial_\eta (b_{K,l})\#_\theta \partial_v a)\,d\theta)^w
    % -(\int^1_0(\partial_va\#_\theta \partial_\eta (b_{K,l})-\partial_\eta (a_{K,l})\#_\theta \partial_v b)\,d\theta)^w\\
    % &= -(\int^1_0\partial_\eta (b_{K,l})\#_\theta \partial_v a)\,d\theta)^w
    % -(\int^1_0\partial_va\#_\theta \partial_\eta (b_{K,l})\,d\theta)^w\\
    % &=:R^w.
    &[c^w(v,D_v),a^w(v,D_v)](c^{-1})^w\in Op(\varepsilon m_{K,l}+\varepsilon^{-\delta}l),
  \end{align*}for some $\delta>0$. Thus fixing $K>K_0$,
  \begin{align*}
    g^w:=(((b^{1/2})_{(K+\varepsilon^{-1-\delta})^{1/2},l^{1/2}})^{-1})^w[c^w,a^w](c^{-1})^w
  \end{align*}
   has a symbol in $S\big(\varepsilon (m^{1/2}+(K+\varepsilon^{-1-\delta})^{1/2}l^{1/2})\big)$ uniformly in $\varepsilon$ and hence by lemma \ref{bound_varepsilon}, for $f\in\S$,
  \begin{align}
    \|g^w(v,D_v)f\|_{L^2}\le C_{k,K}\,\varepsilon\| ((b^{1/2})_{(K+\varepsilon^{-1-\delta})^{1/2},l^{1/2}})^wf\|_{L^2}.
  \end{align}
  Also, in the proof of corollary \ref{equaivlent_coro}, we have shown that $c^w$ is invertible in the form of \eqref{inverse_equation}.
  Thus for $f\in\S$,
  \begin{align*}
    &|([c^w,a^w]f,\,c^wf)_{L^2}|\\
    &=\left|\left([c^w,a^w](c^w)^{-1}c^wf,\,c^wf\right)_{L^2}\right|\\
    &=\left|\left(((b^{1/2})_{(K+\varepsilon^{-1-\delta})^{1/2},l^{1/2}}^w)^{-1}[c^w,a^w](c^w)^{-1}c^wf,\,(b^{1/2})_{(K+\varepsilon^{-1-\delta})^{1/2},l^{1/2}}^wc^wf\right)_{L^2}\right|\\
    &\le \varepsilon\, C_{k,K}\|(b^{1/2})_{(K+\varepsilon^{-1-\delta})^{1/2},l^{1/2}}^wc^wf\|^2_{L^2}\\
&\le \varepsilon C_{k,K}\Big(\|(b^{1/2})^wc^wf\|^2_{L^2}+(K+\varepsilon^{-1-\delta})^{1/2}\|(l^{1/2})^{1/2}c^wf\|^2_{L^2})\Big).
  \end{align*}
  By fixing $K>K^2_0$ and choosing $\varepsilon>0$ sufficiently small, we proved the claim.
  % So now it suffices to control $\|(b^{1/2})_{K+\varepsilon^{-1-\kappa},l^{1/2}}^wc^wf\|_{L^2}$.
  % From assumptions, we know that $b^{1/2}$ satisfies lemma \ref{inverse_lemma}.
  % On the other hand,
  % $(b^{1/2})_{K,l^{1/2}}\#\<\eta\>^{-1} \in S(m^{1/2}_{K,l}\<\eta\>^{-1})\subset S(\tilde{\varepsilon} m^{1/2}_{K,l}+\tilde{\varepsilon}^{-1}l^{1/2})$ uniformly in $\tilde{\varepsilon}$.
  % Thus by lemma \ref{bound_varepsilon}, for $K>K_0$, we have
  % \begin{align*}
  %   \|(b^{1/2})_{K+\varepsilon^{-1-\kappa},l}^w\<D_v\>^{-1}\<D_v\>^{k}f\|_{L^2}
  %   &\le \tilde{\varepsilon}\|(b^{1/2})^w\<D_v\>^{k}f\|_{L^2}+C(K,k,\varepsilon,\tilde{\varepsilon})\|(l^{1/2})^w\<D_v\>^kf\|_{L^2}.
  % \end{align*}Thus by choosing $\varepsilon$ and $\tilde{\varepsilon}$ small, for any $\varepsilon>0$, we have
  % \begin{align}
  %   \left|\left([\<D_v\>^k,a^w(v,D_v)]f,\<D_v\>^kf\right)_{L^2}\right|\le \varepsilon\|(b^{1/2})^w\<D_v\>^kf\|_{L^2}^2 + C_{k,\varepsilon}\|(l^{1/2})^w\<D_v\>^kf\|^2_{L^2}.
  % \end{align}So we proved the claim.
\end{proof}

% &\le C_{k,K}\left(\varepsilon\|(a^{1/2})^w\<D_v\>^kf\|_{L^2}^2+C(\varepsilon)\tilde{\varepsilon}\|(l^{1/2})^w\<D_v\>^kf\|_{L^2}+C(\varepsilon,\tilde{\varepsilon})\|(a^{1/2}_{K,l})^w\<D_v\>^{k-1}f\|_{L^2}\right).

\section{Semigroup of linearized Boltzmann operator}\label{section3}
In this section, we will prove the main result \ref{Main2}.
To obtain a pseudo-differential form of the linearized Boltzmann operator, we will follow the argument in \cite{Global2019}.
Fix $0<\delta\le 1$. Let $\varphi(t)$ be a positive smooth radial function that equal to $1$ when $|t|\le 1/4$ and $0$ when $|t|\ge 1$.
Let $\varphi_\delta(v)=\varphi(|v|^2/\delta^2)$ and $\widetilde{\varphi}_\delta(v)=1-\varphi_\delta(v)$. Then $\varphi_\delta(v)=\varphi(|v|^2/\delta^2)$ equal to $0$ for $|v|\ge \delta$ and $1$ for $|v|\le \delta/2$.
Also, \cite{Global2019} has shown that
\begin{align}
  L&=-a^w-\left(-L_2-\widetilde{L}_{1,\delta,a}-L_{1,3,\delta}-L_{1,4,\delta}+a_s(v,D_v)+(a(v,D_v)-a^w(v,D_v))\right),
\end{align}where
\begin{align*}
  L_2f&:=\int\int B(\mu_*)^{1/2}\left((\mu')^{1/2}f'_*-\mu^{1/2}f_*\right)\,dv_*d\sigma,\\
  \widetilde{L}_{1,\delta,a}f &:= \int\int B\widetilde{\varphi}_\delta(v'-v)(\mu_*)^{1/2}(\mu'_*)^{1/2}f'\,dv_*d\sigma,\\
  L_{1,3,\delta}f&:=f(v)\int\int B \varphi_\delta(v'-v)(\mu'_*-\mu_*)\,dv_*d\sigma,\\
  L_{1,4,\delta}f&:=f(v)\int\int B \varphi_\delta(v'-v)(\mu'_*)^{1/2}((\mu_*)^{1/2}-(\mu'_*)^{1/2})\,dv_*d\sigma,\\
  a_s(v,D_v)f&:=-\int\int B\varphi_\delta(v'-v)(\mu'_*)^{1/2}(f'-f)((\mu_*)^{1/2}-(\mu'_*)^{1/2})\,dv_*d\sigma\\
  a(v,D_v)f&:=-\int\int B\varphi_\delta(v'-v)\mu'_*(f'-f)\,dv_*d\sigma \\
  &\qquad\qquad\qquad + f(v)\int\int B\widetilde{\varphi}_\delta(v'-v)\mu_*\,dv_*d\sigma,
\end{align*}where $a_s$ and $a$ can be written in the form of Carleman representation.
\begin{align*}
  a_s(v,\eta)&:=-\int_{\R^d_h}\int_{E_{0,h}}\tilde{b}\1_{|\alpha|\ge|h|}\varphi_\delta(h)\frac{|\alpha+h|^{1+\gamma+2s}}{|h|^{d+2s}}
  \mu^{1/2}(v+\alpha)\\
  &\qquad\qquad\qquad\qquad\qquad(e^{-2\pi ih\cdot\eta}-1)(\mu^{1/2}(v+\alpha-h)-\mu^{1/2}(v+\alpha))\,d\alpha dh \\
  a(v,\eta)&:=\int_{\R^d_h}\int_{E_{0,h}}\tilde{b}\1_{|\alpha|\ge|h|}\varphi_\delta(h)\frac{|\alpha+h|^{1+\gamma+2s}}{|h|^{d+2s}}\mu(v+\alpha)
  (1-\cos(2\pi\eta\cdot h))\,d\alpha dh \\
  &\qquad\quad+\int_{\R^d_h}\int_{E_{0,h}}\tilde{b}\1_{|\alpha|\ge|h|}\widetilde{\varphi}_\delta(h)\frac{|\alpha+h|^{1+\gamma+2s}}{|h|^{d+2s}}\mu(v+\alpha-h) \,d\alpha dh.
\end{align*}
Recall that
\begin{align}
\tilde{a}(v,\eta):=\<v\>^\gamma(1+|\eta|^2+|\eta\wedge v|^2+|v|^2)^s.
\end{align}
Proposition 3.7 in \cite{Global2019} shows that $\tilde{a}$ is a $\Gamma$-admissible weight, $\tilde{a}\approx a$ and
\begin{align}\label{temp22}
a,\,\tilde{a},\,a_s\in S(\tilde{a}),\ \ \partial_\eta a,\, \partial_\eta \tilde{a}\in S(\varepsilon \tilde{a}+\varepsilon^{-1}\<v\>^{\gamma+2s}),
\end{align}
uniformly in $\varepsilon$.

We define a symbol $b$ by
\begin{align}
  b^w(v,D_v):=a^w(v,D_v)+a_s(v,D_v)+(a(v,D_v)-a^w(v,D_v)).
\end{align}Then $b\in S(\tilde{a})$ and 
\begin{align}\label{equation_L}
  L
  &=-b^w(v,D_v)+\left(L_2+\widetilde{L}_{1,\delta,a}+L_{1,3,\delta}+L_{1,4,\delta}\right).
\end{align}

Firstly, we analyze the pseudo-differential part $b^w$. To apply the theorem \ref{Dissipation_Theorem}, we let
\begin{align}
  l(v):=\<v\>^{\gamma+2s}.
\end{align}Then $l\in S(l)$ and $l\le \tilde{a}$. Please notice that $a$ is positive while $b$ may not.
\begin{Thm}\label{Thm31}Assume $\gamma+2s\le 0$. There exists $C_1>0$ such that
  $-(C_1+b)^w:H(\tilde{a}_1c)\to H(c)$ generates a contraction semigroup on $H(c)$, with $\tilde{a}_1:=\tilde{a}+C_1$.
  Consequently, $-b^w:H(\tilde{a}_1c)\to H(c)$ generates a strongly continuous semigroup on $H(c)$.
\end{Thm}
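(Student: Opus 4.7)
The plan is to apply the Lumer--Phillips theorem to $-(C_1+b)^w$ with domain $H(\tilde{a}_1 c)$. This requires (i) dissipativity on $H(c)$, i.e., $\Re((C_1+b)^w f, f)_{H(c)} \ge 0$ for $f \in H(\tilde{a}_1 c)$, and (ii) surjectivity of $\lambda I + (C_1+b)^w : H(\tilde{a}_1 c) \to H(c)$ for some $\lambda > 0$. Once these yield the contraction semigroup for $-(C_1+b)^w$, the $C_0$-semigroup claim for $-b^w$ follows by the bounded perturbation theorem, since $-b^w = -(C_1+b)^w + C_1 I$.

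For (i), I apply Theorem~\ref{Dissipation_Theorem} with $m = \tilde{a}$, $l = \<v\>^{\gamma+2s}$, and the theorem's auxiliary weight (Theorem~\ref{Dissipation_Theorem}'s ``$b$'') taken equal to $\tilde{a}$. The compatibility conditions $l \in S(l)$, $l \lesssim \tilde{a}$, and $\tilde{a}\<\eta\>^{-N} \lesssim l$ (with $N = 2s$, since $\<v\>^\gamma \le \<v\>^{\gamma+2s}$) are routine. Hypotheses (1) and (2) follow from \eqref{temp22}, using $\partial_\eta \tilde{a}^{1/2} = (\partial_\eta \tilde{a})/(2\tilde{a}^{1/2})$ and the choice $\varepsilon = K^{-1/2}$ to extract the required $K^{-\kappa}$ smallness. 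The $L^2$ G{\aa}rding hypothesis~(3) is obtained by applying Theorem~\ref{Dissipation_theoremL2} to the coercive main part $a \approx \tilde{a}$ and absorbing the remainders $a_s(v,D_v)$ and $a(v,D_v)-a^w(v,D_v)$ into the $(l^{1/2})^w$-term using \eqref{temp22}. Theorem~\ref{Dissipation_Theorem} then gives
\[
\Re(b^w f,f)_{H(c)} \ge \tfrac{1}{C'}\|(\tilde{a}^{1/2})^w c^w f\|_{L^2}^2 - C_k \|(l^{1/2})^w c^w f\|_{L^2}^2,
\]
and because $\gamma+2s \le 0$ forces $l \le 1$, the last term is bounded by $C\|f\|_{H(c)}^2$. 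Choosing $C_1 > C_k C$ delivers (i).

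For (ii), I invoke Lemma~\ref{inverse_lemma} on the symbol $C_1 + b$ with $m = \tilde{a}_1$ and auxiliary weight $l = 1$. Membership $C_1 + b \in S(\tilde{a}_1)$ is immediate, and $\partial_\eta(C_1 + b)_{K,1} = \partial_\eta b$ belongs to $S(K^{-1/2}\tilde{a}_{1,K,1})$ uniformly in $K$ by applying \eqref{temp22} with $\varepsilon = K^{-1/2}$. The crucial pointwise lower bound $|C_1 + b + K| \gtrsim \tilde{a}_1 + K$ comes from $\Re b \gtrsim \tilde{a}$ (inherited from $a \approx \tilde{a} > 0$ together with the smallness of $b - a$ encoded in \eqref{temp22}) after enlarging $C_1$. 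Lemma~\ref{inverse_lemma} then supplies $K > 0$ for which $(C_1 + b + K)^w : H(\tilde{a}_1 c) \to H(c)$ is bijective, giving (ii) with $\lambda = K$. Density of $H(\tilde{a}_1 c) \supset \S$ in $H(c)$ and closedness of $(C_1+b)^w$ (automatic once a single $\lambda I + (C_1+b)^w$ is known to be invertible) are routine, so Lumer--Phillips finishes the proof. The main obstacle is this pointwise coercivity $|C_1+b| \gtrsim \tilde{a}_1$: although $b \in S(\tilde{a})$, its positivity is not visible from the symbol class alone, and the decomposition $b^w = a^w + a_s(v,D_v) + (a(v,D_v)-a^w(v,D_v))$ must be unpacked using the finer estimates from \cite{Global2019} to extract $\Re b \gtrsim \tilde{a}$.
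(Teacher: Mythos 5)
Your proposal is correct and follows essentially the same route as the paper: dissipativity on $H(c)$ via Theorem \ref{Dissipation_Theorem} with $m=\tilde{a}$, $l=\<v\>^{\gamma+2s}$ and auxiliary weight $\tilde{a}$ (the $L^2$ G{\aa}rding bound coming from Theorem \ref{Dissipation_theoremL2} plus absorption of $a_s$ and $a-a^w$), surjectivity of $\lambda I+(C_1+b)^w$ via Lemma \ref{inverse_lemma} with $m=\tilde{a}_1$, $l=1$ using the pointwise coercivity of $C_1+b$, then Corollary \ref{semigroup} and bounded perturbation. The only cosmetic slip is attributing the smallness $a_s,\,a_d\in S(\varepsilon\tilde{a}+\varepsilon^{-1}\<v\>^{\gamma+2s})$ to \eqref{temp22}; the paper takes it from Lemma 4.4 of \cite{Global2019}, exactly the ``finer estimates'' you flag at the end.
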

\begin{proof}For any $C_1>1$, write $b_1:=C_1+b$ and $a_1:=C_1+a$. By semigroup theory \ref{semigroup},
  it suffices to show that there exists $C_1>1$ such that $(-b_1^w,D(b_1^w))$ is dissipative on Hilbert space $H^k_n$ with $D(b_1^w):=H(\tilde{a}_1c)$ and $\lambda I+b_1^w:H(\tilde{a}_1c)\to H(c)$ is invertible for some $\lambda>0$.
Notice here $\tilde{a}_1\ge 1$, hence the identity operator maps $H(\tilde{a}_1c)$ into $H(\tilde{a}_1c)\subset H(c)$ and so $\lambda I+b_1^w$ is well-defined.

To prove $b_1^w$ is dissipative on $H(c)$, we shall verify the assumptions in theorem \ref{Dissipation_Theorem}.
Let $a_d$ to be the symbol of $a^w(v,D_v)-a(v,D_v)$ as a Weyl quantization. Then by lemma 4.4 in \cite{Global2019}, we have
\begin{align}
 a_s,\, a_d,\, \partial_{\eta}\tilde{a},\,\partial_{\eta} a\in S(\varepsilon \tilde{a}+\varepsilon^{-1}\<v\>^{\gamma+2s}),
\end{align}uniformly in $\varepsilon$.
Thus $b=a+a_s+a_d\in S(\tilde{a})$ and $\partial_{\eta} b\in S(\varepsilon\tilde{a}+\varepsilon^{-1}\<v\>^{\gamma+2s})$ uniformly in $\varepsilon$.
So $b_{K,l}\in S(\tilde{a}_{K,l})$ and if we choose $\varepsilon = K^{-1/2}$, then we have
\begin{align}
  \partial_\eta (b_{K,l})\in S(K^{-1/2}\tilde{a}_{K,l})
\end{align}uniformly in $K$.
Thus $b$ satisfies assumption (1) in theorem \ref{Dissipation_Theorem} with $m=\tilde{a}$ and it's trivial that $\tilde{a}$ satisfies assumption (2) in theorem \ref{Dissipation_Theorem} with $m=\tilde{a}$ by using \eqref{temp22}.

On the other hand, $a\, ,\tilde{a}\in S(\tilde{a})$. Choosing $\varepsilon = K^{-1/2}$ in \eqref{temp22}, we find that $\partial_\eta a,\, \partial_\eta \tilde{a}\in S(K^{-1/2}\tilde{a}_{K,l})$. Since $a\approx \tilde{a}$, we know $a_{K,l}\gtrsim \tilde{a}_{K,l}$ and hence $a$ satisfies theorem \ref{Dissipation_theoremL2} with $m=\tilde{a}$.
Thus there exists $C>0$ such that for $f\in\S$,
\begin{align}\label{eqq37}
  \Re(a^w(v,D_v)f,f)_{L^2}\ge \frac{1}{C}\|(\tilde{a}^{1/2})^w(v,D_v)f\|^2_{L^2} - C\|\<v\>^{\gamma/2+s}f\|^2_{L^2}.
\end{align}
Since $\partial_\eta \tilde{a}\in S(\varepsilon\tilde{a}+\varepsilon^{-1}\<v\>^{\gamma+2s})$ uniformly in $\varepsilon\in(0,1)$ and $\tilde{a}\ge \<v\>^{\gamma+2s}$, we have $\partial_\eta(\tilde{a}^{1/2})_{K,l^{1/2}}\in S(\varepsilon\tilde{a}^{1/2}+\varepsilon^{-1}\<v\>^{\gamma/2+s})$ uniformly in $\varepsilon$. Hence choosing $\varepsilon=K^{-1/2}$, we know that $\tilde{a}^{1/2}$ satisfies lemma \ref{inverse_lemma} with $m=\tilde{a}$ and $l=\<v\>^{\gamma/2+s}$ therein.
Thus there exists $K_0>1$ such that for $K>K_0$, $(\tilde{a}^{1/2})_{K,l^{1/2}}:H(\tilde{a})\to L^2$ is invertible in the form of \eqref{inverse_equation} and $((\tilde{a}^{1/2})_{K,l^{1/2}})^{-1}\in S(((\tilde{a}^{1/2})_{K,l^{1/2}})^{-1})$.

As in \cite{Global2019}, we let $a_{pseudo}:=a_s+a_d\in  S(\varepsilon\tilde{a}_{K+\varepsilon^{-2},l})$ uniformly in $\varepsilon$.
Noticing
\begin{align*}
((\tilde{a}^{1/2})_{K+\varepsilon^{-2},l^{1/2}})^{-1}\#a_{pseudo}&\in
S\left( \frac{\varepsilon\big(\tilde{a}+(K+\varepsilon^{-2})l\big)}{\tilde{a}^{\frac{1}{2}}+(K+\varepsilon^{-2})l^{1/2}}\right)\\
&\subset S(\varepsilon(\tilde{a}^{1/2})_{K,l^{1/2}}+\varepsilon^{-1}\<v\>^{\gamma/2+s}),
\end{align*}uniformly in $\varepsilon$, we can apply lemma \ref{bound_varepsilon} to get
\begin{align*}
  |(a^w_{pseudo}f,f)_{L^2}| &= |(((\tilde{a}^{1/2})_{K,l^{1/2}}^w)^{-1}a^w_{pseudo}f,(\tilde{a}^{1/2})_{K,l^{1/2}}^wf)_{L^2}|\\
  &\le C(\varepsilon\|(\tilde{a}^{1/2})^w(v,D_v)f\|^2_{L^2}+C(K,\varepsilon)\|\<v\>^{\gamma/2+s}f\|^2_{L^2}).
\end{align*}
Then picking $\varepsilon$ small, we have for $f\in\S$, by \eqref{eqq37},
\begin{align*}
  \Re(b^w(v,D_v)f,f)_{L^2} &= \Re(a^w(v,D_v)f,f)_{L^2} + \Re(a^w_{pseudo}f,f)_{L^2}\\
  &\ge \frac{1}{C'}\|(\tilde{a}^{1/2})^w(v,D_v)f\|^2_{L^2} - C\|\<v\>^{\gamma/2+s}f\|^2_{L^2}.
\end{align*}

Now, all the assumptions in theorem \ref{Dissipation_Theorem} are fulfilled. Hence there exists $C_0\ge1$ such that for any $f\in\S$,
\begin{align}
  \Re(b^w(v,D_v)f,f)_{H^k_n}&\ge \frac{1}{C_0}\|(\tilde{a}^{1/2})^w(v,D_v)c^wf\|^2_{L^2}
  - C_0\|\<v\>^{\gamma/2+s}c^wf\|^2_{L^2}\\
  &\ge \frac{1}{C_0}\|(\tilde{a}^{1/2})^w(v,D_v)\<D_v\>^kf\|^2_{L^2}
  - C_0\|f\|_{H^k_n},
\end{align}since $\gamma+2s\le 0$.
% since
% \begin{align*}
%   \left|\|\<v\>^{\gamma/2+s}c^wf\|^2_{L^2}-(\<v\>^{\gamma+2s}f,f)_{H^k_n}\right|
%   &= ([\<v\>^{\gamma+2s},c^w]f,c^wf)_{L^2}
%   &\le \varepsilon\|\<v\>^{\gamma/2+s}c^wf\|_{L^2}+C_{k,\varepsilon}\|
% \end{align*}
Thus whenever $C_1>C_0\ge 1$, for $f\in\S$,
\begin{align}
  \Re((b_1)^w(v,D_v)f,f)_{H^k_n}\ge 0.
\end{align}
Recall that the domain of $b_1$ is $H(\tilde{a}_1c)\hookrightarrow H^k_n$. Thus the above inequality is valid for $f\in D(b_1)$, since $\S$ is dense in $H(\tilde{a}_1c)$.

%  By letting $\varepsilon$ small,
%   \begin{align*}
%     |a_s(v,\eta)| + |a_d(v,\eta)|
%     \le  C\varepsilon a(v,\eta) + C\varepsilon^{-1}\<v\>^{\gamma+2s}
%     \le \frac{1}{2}a(v,\eta)+C_1\<v\>^{\gamma+2s}\le \frac{1}{2}a(v,\eta)+C_1,
%   \end{align*}where we choose $C_1$ such that $C_1\ge C_0$.
% {\color{red}Hence for any $K>0$,
%   \begin{align*}
%     |K\<v\>^{\gamma+2s}+C_1\<v\>^{\gamma+2s}+c(v,\eta)|
%     &\ge K\<v\>^{\gamma+2s}+C_1\<v\>^{\gamma+2s}+a(v,\eta)- |a_s(v,\eta)| - |a_d(v,\eta)|\\
%     &\ge K\<v\>^{\gamma+2s}+\frac{1}{2}a(v,\eta)\\
%     &\ge \frac{1}{2}a_{K,\gamma+2s}.seems not work?
%   \end{align*}}
%   \begin{align*}
%     |K+C_1+c(v,\eta)|
%     &\ge K+C_1+a(v,\eta)- |a_s(v,\eta)| - |a_d(v,\eta)|\ge K+\frac{1}{2}a(v,\eta).
%   \end{align*}
  Now we let $l_1=1$, then $l_1\lesssim \tilde{a}_1$, $b_1\in S(\tilde{a}_1)$, $\partial_\eta((b_1)_{K,l_1})\in S(K^{-1/2}(\tilde{a}_1)_{K,l_1})$ uniformly in $K$.
  Since $a_s,a_d\in S(\varepsilon\tilde{a}+\varepsilon^{-1}\<v\>^{\gamma+2s})$, we choose $\varepsilon$ small enough, then
  \begin{align*}
    |C_1+b(v,\eta)+K|
    &\ge K+C_1+a(v,\eta)- |a_s(v,\eta)| - |a_d(v,\eta)|\\
    &\gtrsim K+C_1+\tilde{a}(v,\eta)-\frac{1}{2}\tilde{a}-C\<v\>^{\gamma+2s}
    \gtrsim K+C_1+\tilde{a},
  \end{align*}if $C_1>2C$.
  Thus fixing $C_1$ sufficiently large, $b_1(v,\eta)$ satisfies the assumption of lemma \ref{inverse_lemma} with $m=\tilde{a}_1$ and $l=l_1$ therein. Hence there exists sufficiently large $K_0$ such that for $\lambda>K_0$, $\lambda I+C_1I+b^w(v,D_v):H(\tilde{a}_1c)\to H(c)$ is invertible, hence is surjective.

%The closedness of $b^w_1$ follows easily. Let $f_j\in H(\tilde{a}_1c)$ be any sequence such that
%\begin{align*}
%  f_j\to f\ \text{ in H(c) and}\ b^w_1f_j\to g\ \text{ in in H(c)},
%\end{align*}for some $f,g\in H(c)$. Then for $\lambda>K_0$, $(\lambda I+b^w_1)f_j\to \lambda f+g$ in $H(c)$ and so $f_j\to (\lambda I+b^w_1)^{-1}(\lambda f+g)$ in $H(\tilde{a}_1c)$ hence in $H(c)$.
%Thus $f = (\lambda I+b^w_1)^{-1}(\lambda f+g)\in H(\tilde{a}_1c)$ and $b^w_1f = g$.

Thus by \ref{semigroup}, $(-(b_1)^w,D(b_1))$ generates a contraction semigroup on $H(c)$. But $C_1I$ is a bounded perturbation on $H(c)$, hence $(-b^w(v,D_v),D(b_1))$ generates a strongly continuous semigroup on $H(c)$.
\end{proof}

\begin{proof}[Proof of theorem \ref{Main2}]
By \ref{Thm31} and \eqref{equation_L}, it suffices to prove the operator inside parentheses of \eqref{equation_L} is bounded on $H(c)$, which is shown in the next theorem \ref{Thm32}. Then our proof of \ref{Main2} is completed. 
\end{proof}
Next we will show that the operator in the parentheses of \eqref{equation_L} is actually Weyl quantization with symbol in $S(1)$. The idea here is to use Carleman representation.
\begin{Thm}\label{Thm32}

(1). The operators $L_{1,3,\delta}$, $\widetilde{L}_{1,\delta,a}$, $L_{1,4,\delta}$ and $L_2$ are all Weyl quantizations with symbols in $S(\<v\>^{\gamma+2s})$. 

(2). In particular, if $\gamma+2s\le 0$, their symbols belong to $S(1)$ and the corresponding Weyl quantizations are bounded on $H(c)$.
\end{Thm}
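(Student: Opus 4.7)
The plan is to split the four operators into two groups according to whether they are pure multiplication operators or genuine integral operators. The operators $L_{1,3,\delta}$ and $L_{1,4,\delta}$ have the form $f\mapsto M(v)f(v)$, so they are already Weyl quantizations with $\eta$-independent symbols $M(v)$; I would need only to verify the pointwise bound and derivative estimates. The operators $\widetilde{L}_{1,\delta,a}$ and $L_2$ must be rewritten in Carleman coordinates to exhibit their pseudo-differential form, mimicking the derivation of the symbols $a(v,\eta)$ and $a_s(v,\eta)$ already carried out in the paper.

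For $L_{1,3,\delta}$, the symbol is $M_3(v)=\int\int B\,\varphi_\delta(v'-v)(\mu'_*-\mu_*)\,dv_*d\sigma$. I would apply the Taylor expansion $\mu'_*-\mu_* = (v'_*-v_*)\cdot\int_0^1\nabla\mu(v_*+t(v'_*-v_*))\,dt$, together with $|v'_*-v_*|\lesssim |v-v_*|\sin(\theta/2)$ and the cutoff $\varphi_\delta(v'-v)$ which forces $|v-v_*|\sin(\theta/2)\lesssim\delta$. This cancellation kills the angular singularity $\theta^{-d+1-2s}$ and, after integrating against the Gaussian decay of $\nabla\mu$, yields $|M_3(v)|\lesssim \<v\>^{\gamma+2s}$. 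Since $\varphi_\delta$ is smooth, derivatives in $v$ fall only on that factor and preserve the same bound, giving $M_3\in S(\<v\>^{\gamma+2s})$. The operator $L_{1,4,\delta}$ is treated identically, with $(\mu_*)^{1/2}-(\mu'_*)^{1/2}$ replacing $\mu'_*-\mu_*$.

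For $\widetilde{L}_{1,\delta,a}$ and $L_2$, I would change variables to $(\alpha,h)$ with $h=v-v'$ and $\alpha\in E_{0,h}$, then expand $f(v-h)=\int\hat{f}(\eta)e^{2\pi i(v-h)\cdot\eta}\,d\eta$; the $h$-integration becomes a Fourier transform and exhibits the pseudo-differential symbol. For $\widetilde{L}_{1,\delta,a}$, the support condition $|h|\ge\delta/2$ keeps $|h|^{-d-2s}$ bounded, and the symbol estimates reduce to Gaussian integrals in $\alpha$ against the algebraic weight $|\alpha+h|^{1+\gamma+2s}$; combined with the $\mu^{1/2}(v+\alpha)(\mu'_*)^{1/2}$ decay, this gives at most $\<v\>^{\gamma+2s}$ pointwise, and derivatives in $v$ (hitting Gaussian factors only) produce equally good bounds uniformly in $\eta$. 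For $L_2$, one first exchanges $v$ and $v_*$ to move $f$ to a translate of $v$, then Carleman-izes; the cancellation $(\mu')^{1/2}f'_*-\mu^{1/2}f_*$ vanishing at $\theta=0$ yields an extra factor that renders $|h|^{-d-2s}$ integrable near $h=0$, leaving a symbol in the same class $S(\<v\>^{\gamma+2s})$.

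For part (2), the assumption $\gamma+2s\le 0$ gives $\<v\>^{\gamma+2s}\le 1$, so $S(\<v\>^{\gamma+2s})\subset S(1)$; any Weyl quantization with symbol in $S(1)$ is bounded on $H(c)$, since conjugating by $c^w$ (using its invertibility from corollary \ref{equaivlent_coro}) yields an operator still in $\mathrm{Op}\,S(1)$, which is $L^2$-bounded by the Calder\'on--Vaillancourt theorem. The hardest step will be the Carleman analysis of $L_2$: because no cutoff keeps $|h|$ away from zero, one must rely entirely on the cancellation between gain and loss terms to control the angular singularity, and carefully track how the gain term $(\mu')^{1/2}f'_*$ (with $f$ evaluated at a post-collisional velocity) fits into the Carleman framework uniformly in all $v$- and $\eta$-derivatives.
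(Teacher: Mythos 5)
Your overall architecture (multiplication parts versus Carleman/pseudo-differential parts, then $S(\<v\>^{\gamma+2s})\subset S(1)$ for $\gamma+2s\le 0$ and $H(c)$-boundedness via conjugation with $c^w$) matches the paper, but the cancellation mechanism you propose is too weak to close the argument when $s\ge 1/2$. A single first-order difference --- $|v'_*-v_*|\lesssim|v-v_*|\sin(\theta/2)$ for $L_{1,3,\delta}$ and $L_{1,4,\delta}$, or ``vanishing at $\theta=0$'' giving one factor of $|h|$ in Carleman variables for $L_2$ --- only improves $b(\cos\theta)\sin^{d-2}\theta\approx\theta^{-1-2s}$ to $\theta^{-2s}$ (equivalently $|h|^{-d-2s}$ to $|h|^{-d-2s+1}$), which is still non-integrable for $s\in[1/2,1)$; the cutoff $\varphi_\delta(v'-v)$ gives no help as $\theta\to 0$. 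One needs a gain of \emph{two} powers, and the paper obtains it by different devices in each case: for $L_{1,3,\delta}$ it invokes the cancellation lemma (Lemma 2.3 of \cite{Global2019}), whose change of variables yields a kernel $S$ with $|S(z)|\lesssim |z|^{\gamma}+\delta^{2-2s}|z|^{\gamma+2s-2}$; for $L_{1,4,\delta}$ it does \emph{not} argue ``identically'' but uses the identity $L_{1,4,\delta}f=-\tfrac12 L_{1,3,\delta}f-D(v)f$, where $D$ contains the square $\big((\mu_*)^{1/2}-(\mu'_*)^{1/2}\big)^2=O(|h|^2)$; for the Carleman pieces of $L_2$ it symmetrizes the integral in $h$ (using $\tilde b(\alpha,h)=\tilde b(\pm\alpha,\pm h)$) so that the Gaussian difference becomes the second difference $\mu^{1/2}(v-h)+\mu^{1/2}(v+h)-2\mu^{1/2}(v)=O(|h|^2)$ (Lemma \ref{derivative}); and in $L_{2,d}$ the product of two differences supplies $|h|^2$ directly. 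None of these appears in your plan, and without one of them the estimates fail for $s\ge 1/2$.

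A second, independent gap concerns $L_2$: you cannot let the cancellation act on $f$ itself, since Taylor-expanding $f'_*-f_*$ produces derivatives of $f$, i.e.\ an operator of positive order $\sim 2s$ rather than a symbol in $S(\<v\>^{\gamma+2s})$. The paper first splits $L_2=L_{2,r}+L_{2,ca}+L_{2,c}+L_{2,d}$ so that in every Carleman piece $f$ appears at a single point $f(v+\alpha)$ and all differences fall on Gaussians; the one piece where the difference genuinely acts on $\mu^{1/2}f$, namely $L_{2,ca}=\mu^{1/2}\int\int B\big((\mu^{1/2}f)'_*-(\mu^{1/2}f)_*\big)\,dv_*d\sigma$, is handled by the cancellation lemma, which converts it into a convolution-type symbol $a_{2,ca}\in S(\<v\>^{\gamma+2s})$. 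Your proposal has no analogue of this decomposition, only the remark that the gain/loss cancellation is ``the hardest step''. Two smaller points: in $L_{1,3,\delta}$ the $v$-derivatives do not fall on $\varphi_\delta$ (after translating $v_*\mapsto v+u$, the factor $\varphi_\delta(v'-v)$ is independent of $v$); the paper instead differentiates $\mu$ under the convolution $S*_{v_*}\mu$. And at the end one should record that $a(v,D_v)=(J^{-1/2}a)^w$ with $J^{-1/2}$ preserving $S(\<v\>^{\gamma+2s})$, so the standard quantizations produced by the Carleman representation are indeed Weyl quantizations with symbols in the same class, as the theorem asserts.
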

\begin{proof}Set $f\in\S$. For the part $L_{1,3,\delta}$, by lemma 2.3 in \cite{Global2019},
\begin{align}
  L_{1,3,\delta}f = S*_{v_*}\mu(v) f(v),
\end{align}with $S(z) = S_1(z)+S_2(z)$ satisfying
\begin{align*}|S_1(z)|&\le C|z|^\gamma,\qquad
  |S_2(z)|\le C\delta^{2-2s}|z|^{\gamma+2s-2}.
\end{align*}So by \eqref{equivalent_decay},
\begin{align*}
  |\partial^\alpha_v(S*_{v_*}\mu(v))| &= |S*_{v_*}(\partial^\alpha_v)\mu(v)|\le C(\<v\>^{\gamma}+\delta^{2-2s}\<v\>^{\gamma+2s-2}).
\end{align*}Thus fixing $\delta>0$, the symbol of $L_{1,3,\delta}$ belongs to $Op(\<v\>^{\gamma+2s})$.

 Now we turn to the non-singular part $\widetilde{L}_{1,\delta,a}$.
\begin{align*}
  \widetilde{L}_{1,\delta,a}f &= \int_{\R^d_h}\int_{E_{0,h}}\tilde{b}(\alpha,h)\1_{|\alpha|\ge |h|}\frac{|\alpha+h|^{\gamma+1+2s}}{|h|^{d+2s}}\widetilde{\varphi}_\delta(h)\\
  &\qquad\qquad\qquad\qquad\qquad\qquad\mu^{1/2}(v+\alpha-h)\mu^{1/2}(v+\alpha)f(v-h)\,d\alpha dh\\
  &=:\widetilde{a}_{1,\delta,a}(v,D_v)f,
\end{align*}with
\begin{align*}\widetilde{a}_{1,\delta,a}(v,\eta)&:=
  \int_{\R^d_h}\int_{E_{0,h}}\tilde{b}(\alpha,h)\1_{|\alpha|\ge |h|}\frac{|\alpha+h|^{\gamma+1+2s}}{|h|^{d+2s}}\widetilde{\varphi}_\delta(h)\\
  &\qquad\qquad\qquad\qquad\qquad\mu^{1/2}(v+\alpha-h)\mu^{1/2}(v+\alpha)e^{2\pi ih\cdot\eta}\,d\alpha dh.
\end{align*}Thus
\begin{align*}
  \partial^{\tilde{\beta}}_v\partial^\beta_\eta\tilde{a}_{1,\delta,a}
  &=\int_{\R^d_h}\int_{E_{0,h}}\tilde{b}(\alpha,h)\1_{|\alpha|\ge |h|} \1_{|h|\ge\delta/2} \frac{|\alpha+h|^{\gamma+1+2s}}{|h|^{d+2s}}\widetilde{\varphi}_\delta(h)\\
  &\qquad\qquad\qquad\qquad\qquad\partial^{\tilde{\beta}}_v(\mu^{1/2}(v+\alpha-h)\mu^{1/2}(v+\alpha)) \partial^\beta_\eta e^{2\pi ih\cdot\eta} \,d\alpha dh,\\
  |\partial^{\tilde{\beta}}_v\partial^\beta_\eta\tilde{a}_{1,\delta,a}|
  &\le C_{\tilde{\beta},\beta}\int_{\R^d_h}\int_{E_{0,h}}\1_{|\alpha|\ge |h|} \1_{|h|\ge\delta/2} \frac{|\alpha+h|^{\gamma+1+2s}}{|h|^{d+2s}}\widetilde{\varphi}_\delta(h)\\
  &\qquad\qquad\qquad\qquad\qquad\qquad
  \mu^{1/4}(v+\alpha-h)\mu^{1/4}(v+\alpha) |h|^{|\beta|} \,d\alpha dh\\
  &\le C_{\tilde{\beta},\beta}\int_{\R^d_h}\int_{E_{0,h}}\1_{|\alpha|\ge |h|} \1_{|h|\ge\delta/2} \frac{|\alpha+h|^{\gamma+1+2s}}{|h|^{d+2s}}\widetilde{\varphi}_\delta(h)
  \mu^{1/8}(v+\alpha-h) d\alpha dh\\
  &\le C_{\tilde{\beta},\beta}\<v\>^{\gamma+2s}.
\end{align*}
Thus $\tilde{a}_{1,\delta,a}\in S(\<v\>^{\gamma+2s})$. The last inequality follows from the argument of Proposition 3.5 in \cite{Global2019}, since it's the same as equation (38) therein.

For $L_{1,4,\delta}$, by Lemma 2.5 in \cite{Global2019}, we have
\begin{align}
  L_{1,4,\delta}f = -\frac{1}{2}L_{1,3,\delta}f - D(v)f,
\end{align}where
\begin{align*}
  D(v)&:=\frac{1}{2}\int\int B\varphi_\delta(v'-v)\left((\mu_*)^{1/2}-(\mu'_*)^{1/2}\right)^2\,dv_*d\sigma\\
  &=\frac{1}{2}\int_{\R^d_h}\int_{E_{0,h}}\tilde{b}(\alpha,h)\1_{|\alpha|\ge |h|}\frac{|\alpha+h|^{\gamma+1+2s}}{|h|^{d+2s}}\varphi_\delta(h)\\
  &\qquad\qquad\qquad\qquad\qquad\left(\mu^{1/2}(v+\alpha-h)-\mu^{1/2}(v+\alpha)\right)^2\,d\alpha dh.
  \end{align*}Hence by lemma \ref{derivative} below,
  \begin{align*}
  |\partial^\beta_vD(v)|
  &\le\frac{1}{2}\int_{\R^d_h}\int_{E_{0,h}} \1_{|\alpha|\ge |h|}\frac{|\alpha+h|^{\gamma+1+2s}}{|h|^{d+2s}}\varphi_\delta(h)\\
  &\qquad\qquad\qquad\qquad\qquad\left|\partial^\beta_v\left(\mu^{1/2}(v+\alpha-h)-\mu^{1/2}(v+\alpha)\right)^2\right|\,d\alpha dh,\\
  &\le C\int_{\R^d_h}\int_{E_{0,h}} \1_{|\alpha|\ge |h|}\frac{|\alpha+h|^{\gamma+1+2s}}{|h|^{d+2s}}\varphi_\delta(h)\mu^{1/8}(v+\alpha) |h|^2\,d\alpha dh,\\
  &\le C\delta^{2-2s}\<v\>^{\gamma+2s},
\end{align*}where the last step follows from Lemma 2.5 in \cite{Global2019}.

 Now we deal with the last term $L_2$.
\begin{align*}
  L_2f &= \int\int B(\mu_*)^{1/2}\left((\mu')^{1/2}f'_*-\mu^{1/2}f_*\right)\,dv_*d\sigma\\
  &= \int\int B\left((\mu^{1/2}f)'_*(\mu')^{1/2}-(\mu^{1/2}f)_*(\mu^{1/2})\right)\,dv_*d\sigma\\
  &\qquad\qquad + \int\int B(\mu')^{1/2}\left((\mu_*)^{1/2}-(\mu'_*)^{1/2}\right)f'_*\,dv_*d\sigma\\
  &= \int\int B(\mu^{1/2}f)'_*\left((\mu')^{1/2}-\mu^{1/2}\right)\,dv_*d\sigma\\
  &\qquad
  + \mu^{1/2}\int\int B\left((\mu^{1/2}f)'_*-(\mu^{1/2}f)_*\right)\,dv_*d\sigma\\
  &\qquad\qquad+ \mu^{1/2}\int\int B\left((\mu_*)^{1/2}-(\mu'_*)^{1/2}\right)f'_*\,dv_*d\sigma\\
  &\qquad\qquad\qquad+ \int\int B\left((\mu')^{1/2}-\mu^{1/2}\right)\left((\mu_*)^{1/2}-(\mu'_*)^{1/2}\right)f'_*\,dv_*d\sigma\\
  &=: L_{2,r}f+L_{2,ca}f+L_{2,c}f+L_{2,d}f.
\end{align*}
We will investigate these four parts separately. For $L_{2,ca}$, by Cancellation Lemma, there exists constant $C$ depending only on $B$ hence only on $s$ such that
\begin{align*}
  L_{2,ca}f &= C\mu^{1/2}\int_\Rd |v-v_*|^\gamma(\mu^{1/2}f)_*\,dv_*\\
  &= C\mu^{1/2}\int_\Rd |v_*|^\gamma\mu^{1/2}(v_*+v)f(v_*+v)\,dv_*\\
  &= C\mu^{1/2}\int_\Rd |v_*|^\gamma\mu^{1/2}(v_*+v)\int_\Rd\widehat{f}(\eta)e^{2\pi i (v+v_*)\cdot\eta}\,d\eta\,dv_*\\
  &=: a_{2,ca}(v,D_v)f,
\end{align*}with
\begin{align*}
  a_{2,ca}(v,\eta)=C\mu^{1/2}\int_\Rd |v_*|^\gamma\mu^{1/2}(v_*+v)e^{2\pi i v_*\cdot\eta}\,dv_*.
\end{align*}Then
\begin{align*}
  |\partial^\alpha_v\partial^\beta_\eta a_{2,ca}(v,\eta)| &\le C_{\alpha,\beta}\mu^{1/4}(v)\int_\Rd|v_*|^\gamma\mu^{1/4}(v_*+v)|v_*|^{|\beta|}\,dv_*\\
  &\le C_{\alpha,\beta}\mu^{1/4}(v)\<v\>^{\gamma+|\beta|}\\
  &\le C_{\alpha,\beta}\mu^{1/8}(v)\le C_{\alpha,\beta,b}\<v\>^{\gamma+2s}.
\end{align*}
Thus $a_{2,ca}\in S(\<v\>^{\gamma+2s})$.
For $L_{2,c}$, by using Carleman representation,
\begin{align*}
  L_{2,c} &= \mu^{1/2}\int\int B\left((\mu_*)^{1/2}-(\mu'_*)^{1/2}\right)f'_*\,dv_*d\sigma\\
  &= \int_\Rd\int_{E_{0,h}}\widetilde{b}(\alpha,h)\1_{|\alpha|\ge|h|}\frac{|\alpha+h|^{\gamma+1+2s}}{|h|^{d+2s}}\mu^{1/2}(v)\\
  &\qquad\qquad\qquad\qquad\left(\mu^{1/2}(v+\alpha-h)-\mu^{1/2}(v+\alpha)\right)f(v+\alpha)\,d\alpha dh\\
%  &=\frac{1}{2} \int_\Rd\int_{E_{0,h}}\widetilde{b}(\alpha,h)\1_{|\alpha|\ge|h|}\frac{|\alpha+h|^{\gamma+1+2s}}{|h|^{d+2s}}\mu^{1/2}(v)\\&\qquad\qquad\qquad
%  \left(\mu^{1/2}(v+\alpha-h)+\mu^{1/2}(v+\alpha+h)-2\mu^{1/2}(v+\alpha)\right)f(v+\alpha)\,d\alpha dh\\
  &=: a_{2,c}(v,D_v)f,
\end{align*}with
\begin{align*}
  a_{2,c}(v,\eta)
  &:=  \int_\Rd\int_{E_{0,h}}\widetilde{b}(\alpha,h)\1_{|\alpha|\ge|h|}\frac{|\alpha+h|^{\gamma+1+2s}}{|h|^{d+2s}}\mu^{1/2}(v)\\
  &\qquad\qquad\qquad\qquad
  \left(\mu^{1/2}(v+\alpha-h)-\mu^{1/2}(v+\alpha)\right)e^{2\pi i \alpha\cdot\eta}\,d\alpha dh\\
  &= \frac{1}{2} \int_\Rd\int_{E_{0,h}}\widetilde{b}(\alpha,h)\1_{|\alpha|\ge|h|}\frac{|\alpha+h|^{\gamma+1+2s}}{|h|^{d+2s}}\mu^{1/2}(v)\\&\qquad
  \left(\mu^{1/2}(v+\alpha-h)+\mu^{1/2}(v+\alpha+h)-2\mu^{1/2}(v+\alpha)\right)e^{2\pi i \alpha\cdot\eta}\,d\alpha dh.
\end{align*}
We split this integral into two parts: $\1_{|h|\ge 1}$ and $\1_{|h|\le 1}$, the non-singular part and singular part. Then for any multi-index $\tilde{\beta},\beta \in \N^d$,
\begin{align*}
  &|\partial^{\tilde{\beta}}_v\partial^\beta_\eta a_{2,c,non-singular}|\\
  &= \Big|\partial^{\tilde{\beta}}_v\partial^\beta_\eta\int_\Rd\int_{E_{0,h}} \widetilde{b}(\alpha,h)\1_{|\alpha|\ge|h|}\1_{|h|\ge 1} \frac{|\alpha+h|^{\gamma+1+2s}}{|h|^{d+2s}}\mu^{1/2}(v)\\
  &\qquad\qquad\qquad\qquad\qquad
  \left(\mu^{1/2}(v+\alpha-h)-\mu^{1/2}(v+\alpha)\right)e^{2\pi i \alpha\cdot\eta}\,d\alpha dh\Big|\\
  &\le C_{\tilde{\beta},\beta} \int_\Rd\int_{E_{0,h}}\1_{|\alpha|\ge|h|}\1_{|h|\ge 1}\frac{|\alpha+h|^{\gamma+1+2s}}{|h|^{d+2s}} \mu^{1/4}(v)\\
  &\qquad\qquad\qquad\qquad\qquad
  \left(\mu^{1/4}(v+\alpha-h)+\mu^{1/4}(v+\alpha)\right)|\alpha|^{|\beta|}\,d\alpha dh\\
  &\le C_{\tilde{\beta},\beta} \int_\Rd\int_{E_{0,h}}\1_{|\alpha|\ge|h|}\1_{|h|\ge 1}
  \frac{1}{|h|^{d+2s}}
  \bigg(|\alpha+h|^{\gamma+1+2s+|\beta|}\mu^{1/4}(v)\mu^{1/4}(v+\alpha-h)\\
  &\qquad\qquad\qquad\qquad\qquad\qquad\qquad\qquad\
   +|\alpha|^{\gamma+1+2s+|\beta|}\mu^{1/4}(v)\mu^{1/4}(v+\alpha)\bigg)\,d\alpha dh\\
  &\le C_{\tilde{\beta},\beta} \int_\Rd\int_{E_{0,h}}\1_{|\alpha|\ge|h|}\1_{|h|\ge 1}\frac{1}{|h|^{d+2s}}
  \mu^{1/72}(v)\mu^{1/72}(v+\alpha)\,d\alpha dh\\
  &\le C_{\tilde{\beta},\beta} \mu^{1/72}(v)\le C_{\tilde{\beta},\beta}\<v\>^{\gamma+2s},
\end{align*}by using the lemma \ref{v_and_v_star} below.
For the singular part, applying lemma \ref{derivative} below, we have
\begin{align*}
  &|\partial^{\tilde{\beta}}_v\partial^\beta_\eta a_{2,c,singular}|\\
  &= |\partial^{\tilde{\beta}}_v\partial^\beta_\eta\frac{1}{2} \int_\Rd\int_{E_{0,h}}\widetilde{b}(\alpha,h)\1_{|\alpha|\ge|h|}\1_{|h|\le 1} \frac{|\alpha+h|^{\gamma+1+2s}}{|h|^{d+2s}}\mu^{1/2}(v)\\
  &\qquad\qquad\qquad\qquad
  \left(\mu^{1/2}(v+\alpha-h)+\mu^{1/2}(v+\alpha+h)-2\mu^{1/2}(v+\alpha)\right)e^{2\pi i \alpha\cdot\eta}\,d\alpha dh|\\
  &\le C_{\tilde{\beta},\beta}\int_\Rd\int_{E_{0,h}}\1_{|\alpha|\ge|h|}\1_{|h|\le 1}\frac{|\alpha+h|^{\gamma+1+2s}}{|h|^{d+2s}} |h|^2\mu^{1/4}(v)\mu^{1/16}(v+\alpha)|\alpha|^{|\beta|}\,d\alpha dh,\\
  &\le C_{\tilde{\beta},\beta}\int_\Rd\int_{E_{0,h}}\1_{|\alpha|\ge|h|}\1_{|h|\le 1}\frac{1}{|h|^{d+2s-2}} \mu^{1/32}(v)\mu^{1/32}(v+\alpha)\,d\alpha dh,\\
  &\le C_{\tilde{\beta},\beta}\mu^{1/32}(v),
\end{align*}
Since $s\in(0,1)$ and $\frac{1}{|h|^{d+2s-2}}$ is locally integrable on $\Rd$. Thus $a_{2,c}\in S(\<v\>^{\gamma+2s})$. For the part $L_{2,r}$, the argument is similar.
\begin{align*}
  L_{2,r}f &= \int\int B (\mu^{1/2}f)'_*((\mu^{1/2})'-\mu^{1/2})\,dv_*d\sigma\\
  &=\int_\Rd\int_{E_{0,h}}\widetilde{b}(\alpha,h)\1_{|\alpha|\ge|h|}\frac{|\alpha+h|^{\gamma+1+2s}}{|h|^{d+2s}}\mu^{1/2}(v+\alpha)\\
  &\qquad\qquad\qquad\qquad\left(\mu^{1/2}(v-h)-\mu^{1/2}(v)\right)f(v+\alpha)\,d\alpha dh\\
  &=: a_{2,r}(v,D_v)f,
\end{align*}with
\begin{align*}
  a_{2,r}(v,\eta)&:=
  \int_\Rd\int_{E_{0,h}}\widetilde{b}(\alpha,h)\1_{|\alpha|\ge|h|}\frac{|\alpha+h|^{\gamma+1+2s}}{|h|^{d+2s}}\mu^{1/2}(v+\alpha)\\
  &\qquad\qquad\qquad\qquad\left(\mu^{1/2}(v-h)-\mu^{1/2}(v)\right)e^{2\pi i\alpha\cdot\eta}\,d\alpha dh\\
  &=\frac{1}{2}\int_\Rd\int_{E_{0,h}}\widetilde{b}(\alpha,h)\1_{|\alpha|\ge|h|}\frac{|\alpha+h|^{\gamma+1+2s}}{|h|^{d+2s}}\mu^{1/2}(v+\alpha)\\&\qquad\qquad\qquad\qquad
  \left(\mu^{1/2}(v-h)+\mu^{1/2}(v+h)-2\mu^{1/2}(v)\right)e^{2\pi i\alpha\cdot\eta}\,d\alpha dh.
\end{align*}We split the integral into singular and non-singular part, and then the argument is similar to the part $L_{2,c}$ and we can obtain $a_{2,r}\in S(\<v\>^{\gamma+2s})$.
It remains to study $L_{2,d}$ which is
\begin{align*}
  L_{2,d} &= \int\int B\left((\mu')^{1/2}-\mu^{1/2}\right)\left((\mu_*)^{1/2}-(\mu'_*)^{1/2}\right)f'_*\,dv_*d\sigma\\
  &= \int_\Rd\int_{E_{0,h}}\widetilde{b}(\alpha,h)\1_{|\alpha|\ge|h|}\frac{|\alpha+h|^{\gamma+1+2s}}{|h|^{d+2s}}\left(\mu^{1/2}(v-h)-\mu^{1/2}(v)\right)\\
  &\qquad\qquad\qquad\qquad\qquad\qquad
  \left(\mu^{1/2}(v+\alpha-h)-\mu^{1/2}(v+\alpha)\right)f(v+\alpha)\,d\alpha dh\\
  &=a_{2,d}(v,D_v)f,
\end{align*}with
\begin{align*}
  a_{2,d}(v,\eta) &:=  \int_\Rd\int_{E_{0,h}}\widetilde{b}(\alpha,h)\1_{|\alpha|\ge|h|}\frac{|\alpha+h|^{\gamma+1+2s}}{|h|^{d+2s}}
  \left(\mu^{1/2}(v-h)-\mu^{1/2}(v)\right)\\
  &\qquad\qquad\qquad\qquad\qquad
  \left(\mu^{1/2}(v+\alpha-h)-\mu^{1/2}(v+\alpha)\right)
  e^{2\pi i \alpha\cdot\eta}\,d\alpha dh.
\end{align*}
Now using the identity $a^2-b^2=(a+b)(a-b)$ and lemma \ref{v_and_v_star}, we can split the Gaussian function into
\begin{align*}
	&\left(\mu^{1/2}(v-h)-\mu^{1/2}(v)\right)\left(\mu^{1/2}(v+\alpha-h)-\mu^{1/2}(v+\alpha)\right)\\
	&\quad =\mu^{1/80}(v)\mu^{1/80}(v+\alpha)\left(\mu^{1/4}(v-h)-\mu^{1/4}(v)\right)\left(\mu^{1/4}(v+\alpha-h)-\mu^{1/4}(v+\alpha)\right).
\end{align*} Then the remaining analysis is exactly the same as before. That is to split the integral into singular and non-singular parts. The terms inside the parentheses will cancel the singularity on $h$ and then we can have $a_{2,d}\in S(\<v\>^{\gamma+2s})$.
\end{proof}

Here we list two short lemmas used in the proof.
\begin{Lem}\label{v_and_v_star}If $|\alpha|\ge |h|$, $\alpha\cdot h=0$ then
  \begin{align*}
    \mu(v-h)\mu(v+\alpha)=\mu(v)\mu(v+\alpha-h)\le \mu^{1/9}(v)\mu^{1/9}(v+\alpha),\\
    \mu(v-h)\mu(v+\alpha-h)\le \mu^{1/20}(v)\mu^{1/20}(v+\alpha).
  \end{align*}
\end{Lem}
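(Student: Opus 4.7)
The strategy is purely computational: translate every inequality between products of Gaussians into an inequality between the corresponding exponents and then exploit the orthogonality $\alpha\cdot h=0$ together with $|h|\le|\alpha|$.

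First I would establish the equality $\mu(v-h)\mu(v+\alpha)=\mu(v)\mu(v+\alpha-h)$. Expanding,
\begin{align*}
|v-h|^2+|v+\alpha|^2 &= 2|v|^2-2v\cdot h+2v\cdot\alpha+|h|^2+|\alpha|^2,\\
|v|^2+|v+\alpha-h|^2 &= 2|v|^2-2v\cdot h+2v\cdot\alpha+|h|^2+|\alpha|^2-2\alpha\cdot h,
\end{align*}
so the two quantities agree as soon as $\alpha\cdot h=0$, which immediately gives the first equality since $\mu$ is Gaussian.

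For the first inequality I would show $|v+\alpha|^2\le 8|v|^2+9|v+\alpha-h|^2$, which (after multiplying by $1/18$ and comparing normalization constants $(2\pi)^{-d}\le(2\pi)^{-d/9}$) yields $\mu(v)\mu(v+\alpha-h)\le\mu^{1/9}(v)\mu^{1/9}(v+\alpha)$. The key observation is that $\alpha\cdot h=0$ gives $|\alpha-h|^2=|\alpha|^2+|h|^2$, so $|h|\le|\alpha|\le|\alpha-h|$. Then
\begin{align*}
|h|^2\le|\alpha-h|^2=|(v+\alpha-h)-v|^2\le 2|v+\alpha-h|^2+2|v|^2,
\end{align*}
and substituting into $|v+\alpha|^2\le 2|v+\alpha-h|^2+2|h|^2$ produces the required bound $|v+\alpha|^2\le 4|v|^2+6|v+\alpha-h|^2$.

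For the second inequality I would similarly reduce to showing $|v|^2+|v+\alpha|^2\le 10|v-h|^2+10|v+\alpha-h|^2$. Using $(v+\alpha-h)-(v-h)=\alpha$ I get $|\alpha|^2\le 2|v-h|^2+2|v+\alpha-h|^2$, hence $|h|^2\le 2|v-h|^2+2|v+\alpha-h|^2$. Combining with $|v|^2\le 2|v-h|^2+2|h|^2$ and $|v+\alpha|^2\le 2|v+\alpha-h|^2+2|h|^2$ then delivers the inequality after simple arithmetic, which is the exponent statement corresponding to $\mu(v-h)\mu(v+\alpha-h)\le\mu^{1/20}(v)\mu^{1/20}(v+\alpha)$; the prefactor comparison $(2\pi)^{-d}\le(2\pi)^{-d/10}$ is again trivial.

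There is no genuine obstacle here; the only place where any care is needed is in tracking the constants $1/9$ and $1/20$ and making sure the orthogonality condition is used to convert $|h|\le|\alpha|$ into the crucial bound $|h|^2\le 2|v-h|^2+2|v+\alpha-h|^2$ (respectively $|h|^2\le 2|v|^2+2|v+\alpha-h|^2$), which is the one line where the hypothesis $\alpha\cdot h=0$ enters non-trivially via $|\alpha|\le|\alpha-h|$.
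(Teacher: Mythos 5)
Your proof is correct and is essentially the paper's argument: the equality follows by expanding the squares and using $\alpha\cdot h=0$, and both inequalities follow from triangle-inequality estimates that use the orthogonality to convert $|h|\le|\alpha|\le|\alpha-h|$ into bounds by $|v|^2$, $|v-h|^2$, $|v+\alpha-h|^2$, exactly as in the paper (your constants are in fact slightly sharper than the paper's $9$ and $20$). The only nitpick is cosmetic: for the second bound the target prefactor is $(2\pi)^{-d/20}$ rather than $(2\pi)^{-d/10}$, which changes nothing since your exponent inequality $|v|^2+|v+\alpha|^2\le 10\bigl(|v-h|^2+|v+\alpha-h|^2\bigr)$ is stronger than the required factor $20$.
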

\begin{proof}
  Since $\alpha\cdot h=0$, we have $|v-h|^2+|v+\alpha|^2=|v|^2+|v+\alpha-h|^2$ and the first equality if proved.
  Notice $|v+\alpha|\le |v-h|+|\alpha+h|\le |v-h|+\sqrt{2}|\alpha|\le (1+\sqrt{2})|v-h|+\sqrt{2}|v+\alpha-h|$ and $|v|\le |v-h|+|h|\le |v-h|+|\alpha|\le 2|v-h|+|v+\alpha-h|$, we have
  \begin{align*}
    |v|^2+|v+\alpha|^2\le 20(|v-h|^2+|v+\alpha-h|^2),
  \end{align*}and the second inequality is proved. Similarly, $|v+\alpha|\le |v|+|\alpha-h|\le 2|v|+|v+\alpha-h|$ and hence
  \begin{align*}
    |v|^2+|v+\alpha|^2\le |v|^2+8|v|^2+2|v+\alpha-h|^2\le 9(|v|^2+|v+\alpha-h|^2),
  \end{align*}and hence the first inequality is proved.
\end{proof}

\begin{Lem}\label{derivative}If $|h|\le 1$, for $\beta\in\N^d$, there exists $C_{\beta}>0$ such that for $v\in \R^d$,
  \begin{align*}
    |\partial^\beta_v\left(\mu^{1/2}(v-h)-\mu^{1/2}(v)\right)|&\le C_{\beta}|h|\mu^{1/16}(v),\\
    |\partial^\beta_v\left(\mu^{1/2}(v-h)+\mu^{1/2}(v+h)-2\mu^{1/2}(v)\right)| &\le C_\beta|h|^2\mu^{1/16}(v).
  \end{align*}
\end{Lem}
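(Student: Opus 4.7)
The plan is to prove both inequalities by a standard Taylor expansion of order one (respectively order two) combined with the elementary fact that any polynomial factor can be absorbed into the Gaussian by slightly weakening the exponent. Explicitly, I would write
\begin{align*}
  \mu^{1/2}(v-h)-\mu^{1/2}(v) &= -\int_0^1 h\cdot(\nabla\mu^{1/2})(v-\tau h)\,d\tau,\\
  \mu^{1/2}(v-h)+\mu^{1/2}(v+h)-2\mu^{1/2}(v) &= \int_0^1 (1-\tau)\big[(h\cdot\nabla)^2\mu^{1/2}(v-\tau h)\\
  &\qquad\qquad\qquad + (h\cdot\nabla)^2\mu^{1/2}(v+\tau h)\big]\,d\tau,
\end{align*}
and then differentiate $\partial^\beta_v$ under the integral sign. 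Since $\partial^\beta_v\nabla\mu^{1/2}$ and $\partial^\beta_v(\nabla\otimes\nabla)\mu^{1/2}$ are each of the form $P_\beta(w)\mu^{1/2}(w)$ for some polynomial $P_\beta$ whose degree depends only on $\beta$, the integrand will be pointwise controlled by a polynomial in $v\pm\tau h$ times $\mu^{1/2}(v\pm\tau h)$.

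The key pointwise step is to compare $\mu^{1/2}(v-\tau h)$ with $\mu^{1/2}(v)$ when $|h|\le 1$ and $\tau\in[0,1]$. Using the elementary inequality $|v-\tau h|^2\ge \tfrac{1}{2}|v|^2-|h|^2\ge \tfrac{1}{2}|v|^2-1$, one immediately obtains $\mu^{1/2}(v-\tau h)\le C\mu^{1/4}(v)$, and the same bound for $v+\tau h$. Combined with $|v\pm\tau h|^k\le C_k(1+|v|)^k$, the polynomial factor $P_\beta$ is absorbed since $(1+|v|)^{k}\mu^{3/16}(v)$ is bounded, leaving the desired decay $\mu^{1/16}(v)$ with a constant depending only on $\beta$.

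Putting these together, the first-order remainder gives a factor $|h|$ in front of $\int_0^1 |P_\beta(v-\tau h)|\mu^{1/2}(v-\tau h)\,d\tau\le C_\beta \mu^{1/16}(v)$, producing the first inequality, while the second-order remainder gives a factor $|h|^2$ and an analogous bound, producing the second inequality. The main (and essentially only) obstacle is just the bookkeeping of how much Gaussian decay is consumed in absorbing the polynomial factors and the shift $v\mapsto v\pm\tau h$; once one commits to a cheap exponent such as $1/16$, every step is a direct estimate with no singularity to deal with, so no further delicate argument is required.
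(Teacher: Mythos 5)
Your proposal is correct: the Taylor expansions with integral remainder are the right identities (the symmetric second-order remainder indeed has vanishing first-order term), derivatives of $\mu^{1/2}$ are polynomial multiples of $\mu^{1/2}$, and the absorption steps $\mu^{1/2}(v\pm\tau h)\le C\mu^{1/4}(v)$ (from $|v\pm\tau h|^2\ge \tfrac12|v|^2-1$ for $|h|\le 1$) and $(1+|v|)^k\mu^{3/16}(v)\le C_k$ close the argument with the stated uniform constants. The route, however, is genuinely different from the paper's. The paper first gets the zeroth-order bound by the mean value theorem, then writes $\mu^{1/2}(v-h)-\mu^{1/2}(v)=C\,e^{-|v|^2/4}\big(e^{v\cdot h/2-|h|^2/4}-1\big)$, so that all the $h$-dependence sits in the second factor; applying Leibniz, every $v$-derivative either hits the Gaussian prefactor (producing harmless polynomial factors) or hits the bracket (producing a factor $|h|$ outright, or one uses the bound $|e^{v\cdot h/2-|h|^2/4}-1|\le C|h|e^{|v|^2/8}$ for the undifferentiated bracket), and the second estimate is treated "similarly." Your approach instead differentiates under the Taylor integral and compares shifted Gaussians directly. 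What each buys: the factorization makes the Leibniz bookkeeping trivial because the smallness in $h$ is isolated in one factor, but it requires the slightly ad hoc exponential identity and a separate (less explicit) argument for the second-order difference; your integral-remainder version is more systematic, gives the $|h|$ and $|h|^2$ factors automatically and uniformly in $\beta$, and generalizes immediately beyond Gaussians to any profile whose derivatives are controlled by a common rapidly decaying weight, at the modest cost of tracking how much Gaussian decay is spent on the shift and on the polynomial factors — which you do correctly.
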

\begin{proof}
  We recall the definition $\mu(v)=(2\pi)^{-d/2}e^{-|v|^2/2}$. Firstly, by mean value theorem, we have for some $\delta\in(0,1)$,
  \begin{align*}
    \left|\left(\mu^{1/2}(v-h)-\mu^{1/2}(v)\right)\right|
    &\le |h\partial_v(\mu^{1/2})(v-\delta h)| \le C|h|\mu^{1/4}.
  \end{align*}
  Thus
  \begin{align*}
    \left|\left(e^{-|v-h|^2/4}-e^{-|v|^2/4}\right)\right| &\le C |h|e^{-|v|^2/8},\\
    \left|\left(e^{v\cdot h/2-|h|^2/4}-1\right)\right| &\le C |h|e^{|v|^2/8},
  \end{align*}
  Now notice $\partial^\beta_v\left(\mu^{1/2}(v-h)-\mu^{1/2}(v)\right)
    = C \partial^\beta_v\big(e^{-|v|^2/4}(e^{v\cdot h/2-|h|^2/4}-1)\big)$,
  then the first estimate follows from Leibniz formula. The second inequality follows similarly.
  \end{proof}

\section{Appendix}
  % The following result is from the book \cite{Lerner} theorem 2.3.7.
\paragraph{Pseudo-differential calculus}

  We recall some notation and theorem of pseudo-differential calculus. For details, one may refer to Chapter 2 in the book \cite{Lerner2010}, Proposition 1.1 in \cite{Bony1998-1999} and \cite{Beals1981,Bony1994} for details. As above, we set $\Gamma=|dv|^2+|d\eta|^2$, but notice that the following are also valid for general admissible metric.
  Let $M$ be an $\Gamma$-admissible weight function. That is, $M:\R^{2d}\to (0,+\infty)$ satisfies the following conditions:\\
  (a). (slowly varying) there exists $\delta>0$ such that for any $X,Y\in\R^{2d}$, $|X-Y|\le \delta$ implies
  \begin{align}
    M(X)\approx M(Y);
  \end{align}
  (b) (temperance) there exists $C>0$, $N\in\R$, such that for $X,Y\in \R^{2d}$,
  \begin{align}
    \frac{M(X)}{M(Y)}\le C\<X-Y\>^N.
  \end{align}
  A direct result is that if $M_1,M_2$ are two $\Gamma$-admissible weight, then so is $M_1+M_2$ and $M_1M_2$. Consider symbols $a(v,\eta,\xi)$ as a function of $(v,\eta)$ with parameters $\xi$. We say that
  $a\in S(M,\Gamma)$ uniformly in $\xi$, if for $\alpha,\beta\in \N^d$, $v,\eta\in\Rd$,
  \begin{align}
    |\partial^\alpha_v\partial^\beta_\eta a(v,\eta,\xi)|\le C_{\alpha,\beta}M,
  \end{align}with $C_{\alpha,\beta}$ a constant depending only on $\alpha$ and $\beta$, but independent of $\xi$. The space $S(M,\Gamma)$ endowed with the seminorms
  \begin{align}
    \|a\|_{k;S(M,\Gamma)} = \max_{0\le|\alpha|+|\beta|\le k}\sup_{(v,\eta)\in\R^{2d}}
    |M(v,\eta)^{-1}\partial^\alpha_v\partial^\beta_\eta a(v,\eta,\xi)|,
  \end{align}becomes a Fr\'{e}chet space.
  Sometimes we write $\partial_\eta a\in S(M,\Gamma)$ to mean that $\partial_{\eta_j} a\in S(M,\Gamma)$ $(1\le j\le d)$ equipped with the same seminorms.
  We formally define the pseudo-differential operator by
  \begin{align*}
    (op_ta)u(x)=\int_\Rd\int_\Rd e^{2\pi i (x-y)\cdot\xi}a((1-t)x+ty,\xi)u(y)\,dyd\xi,
  \end{align*}for $t\in\R$, $f\in\S$.
  In particular, denote $a(v,D_v)=op_0a$ to be the standard pseudo-differential operator and
  $a^w(v,D_v)=op_{1/2}a$ to be the Weyl quantization of symbol $a$. We write $A\in Op(M,\Gamma)$ to represent that $A$ is a Weyl quantization with symbol belongs to class $S(M,\Gamma)$. One important property for Weyl quantization of a real-valued symbol is the formal self-adjointness on $L^2$. Here, formal means the equation for self-adjointness is valid once they are well-defined.

Let $a_1(v,\eta)\in S(M_1,\Gamma),a_2(v,\eta)\in S(M_2,\Gamma)$, then $a_1^wa_2^w=(a_1\#a_2)^w$, $a_1\#a_2\in S(M_1M_2,\Gamma)$ with
\begin{align*}
  a_1\#a_2(v,\eta)=a_1(v,\eta)a_2(v,\eta)
  +\int^1_0(\partial_{\eta}a_1\#_\theta \partial_{v} a_2-\partial_{v} a_1\#_\theta \partial_{\eta} a_2)\,d\theta,\\
  g\#_\theta h(Y):=\frac{1}{(\pi\theta)^{2d}}\int_\Rd\int_\Rd e^{-2i\sigma(Y-Y_1,Y-Y_2)/\theta}g(Y_1) h(Y_2)\,dY_1dY_2,
\end{align*}with $Y=(v,\eta)$.
For any non-negative integer $k$, there exists $l,C$ independent of $\theta\in[0,1]$ such that
\begin{align}\label{sharp_theta}
  \|g\#_\theta h\|_{k;S(M_1M_2,\Gamma)}\le C\|g\|_{l,S(M_1,\Gamma)}\|h\|_{l,S(M_2,\Gamma)}.
\end{align}
% with
% \begin{align}
%   \|g\|_{k;S(M,\Gamma)}:=\max_{0\le|\alpha|+|\beta|\le k}\sup_{v,\eta\in\R^d}\left|M(v,\eta)^{-1}\partial^\alpha_v\partial^\beta_\eta g(v,\eta)\right|.
% \end{align}
Thus if $\partial_{\eta}a_1,\partial_{\eta}a_2\in S(M'_1,\Gamma)$ and $\partial_{v}a_1,\partial_{v}a_2\in S(M'_2,\Gamma)$, then $[a_1,a_2]\in S(M'_1M'_2,\Gamma)$, where $[\cdot,\cdot]$ is the commutator defined by $[A,B]:=AB-BA$.

We can define a Hilbert space $H(M,g):=\{u\in\S':\|u\|_{H(M,g)}<\infty\}$, where
\begin{align*}
  \|u\|_{H(M,g)}:=\int M(Y)^2\|\varphi^w_Yu\|^2_{L^2}|g_Y|^{1/2}\,dY<\infty,
\end{align*}and $(\varphi_Y)_{Y\in\R^{2d}}$ is any uniformly confined family of symbols which is a partition of unity. If $a\in S(M)$ is a isomorphism from $H(M')$ to $H(M'M^{-1})$, then $(a^wu,a^wv)$ is an equivalent Hilbertian structure on $H(M)$. Moreover, the space $\S(\Rd)$ is dense in $H(M)$.

For $0\le\delta\le\rho\le 1$, $\delta<1$, $m\in\R$, the metric $g_{\rho,\delta}:=\<\xi\>^{2\delta}|dx|^2 +\<\xi\>^{-2\rho}|d\xi|^2$ is admissible and
\begin{align*}
  H^m = H(\<\xi\>^{m},g_{1,0})=H(\<\xi\>^m,g_{\rho,\delta}).
\end{align*} This can be proved by using the technique in corollary \ref{equaivlent_coro}.

Let $a\in S(M,g)$, then
    $a^w:H(M_1,g)\to H(M_1/M,g)$ is linear continuous, in the sense of unique bounded extension from $\S$ to $H(M_1,\Gamma)$.
Also the existence of $b\in S(M^{-1},\Gamma)$ such that $b\#a = a\#b = 1$ is equivalent to the invertibility of $a^w$ as an operator from $H(MM_1,\Gamma)$
onto $H(M_1,\Gamma)$ for some $\Gamma$-admissible weight function $M_1$.

For the metric $\Gamma=|dv|^2+|d\eta|^2$, the map $J^t=\exp(2\pi i D_v\cdot D_\eta)$ is an isomorphism of the Fr\'{e}chet space $S(M,\Gamma)$, with polynomial bounds in the real variable $t$, where $D_v=\partial_v/i$, $D_\eta=\partial_\eta/i$. Moreover, $a(x,D_v)=(J^{-1/2}a)^w$.

\paragraph{Carleman representation and cancellation lemma}

Now we have a short review of some useful facts in the theory of Boltzmann equation. One may refer to \cite{Alexandre2000,Global2019} for details. The first one is the so called Carleman representation. For measurable function $F(v,v_*,v',v'_*)$, if any sides of the following equation is well-defined, then
\begin{align*}
  &\int_{\R^d}\int_{\mathbf{S}^{d-1}}b(\cos\theta)|v-v_*|^\gamma F(v,v_*,v',v'_*)\,d\sigma dv_*\\
  &\qquad=\int_{\R^d_h}\int_{E_{0,h}}\tilde{b}(\alpha,h)\1_{|\alpha|\ge|h|}\frac{|\alpha+h|^{\gamma+1+2s}}{|h|^{d+2s}}F(v,v+\alpha-h,v-h,v+\alpha)\,d\alpha dh,
\end{align*}where $\tilde{b}(\alpha,h)$ is bounded from below and above by positive constants, and $\tilde{b}(\alpha,h)=\tilde{b}(\pm\alpha,\pm h)$, $E_{0,h}$ is the hyper-plane orthogonal to $h$ containing the origin. The second is the cancellation lemma. Consider a measurable function $G(|v-v_*|,|v-v'|)$, then for $f\in\S$,
\begin{align*}
  \int_{\R^d}\int_{\mathbf{S}^{d-1}}G(|v-v_*|,|v-v'|)b(\cos\theta)(f'_*-f_*)\,d\sigma dv_* = S*_{v_*}f(v),
\end{align*}where $S$ is defined by, for $z\in\R^d$,
\begin{align*}
  S(z)=2\pi \int^{\pi/2}_0 b(\cos\theta)\sin\theta\left(G(\frac{|z|}{\cos\theta/2},\frac{|z|\sin\theta/2}{\cos\theta/2}) - G(|z|,|z|\sin(\theta/2))\right)\,d\theta.
\end{align*}

\paragraph{Semigroup theory}

Here we write some well-known result from semigroup theory. One may refer to \cite{Engel1999} for more details.
\begin{Def}
  A linear operator $(A,D(A))$ on a Banach space $X$ is called dissipative if
  $\|(\lambda I - A)x\|\ge \lambda \|x\|$
  for all $\lambda > 0$ and $x\in D(A)$.
\end{Def}
\begin{Prop}
  An operator $(A,D(A))$ is dissipative if and only if for every $x \in D(A)$ there exists $j(x) \in \{x'\in X':\<x,x'\>=\|x\|^2=\|x'\|^2\}$ such that
\begin{align}
  \Re\<Ax,j(x)\>\le 0.
\end{align}
\end{Prop}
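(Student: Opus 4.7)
The plan is to argue both implications by standard duality techniques; this is essentially the classical Lumer--Phillips characterization of dissipativity, and I would split the proof into the two directions.

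For the easy $(\Leftarrow)$ direction, I would fix $x\in D(A)$ and $\lambda>0$ and use the supplied $j(x)$ as a test functional. Since $\|j(x)\|=\|x\|$ and $\<x,j(x)\>=\|x\|^2$, one has
\begin{align*}
\|(\lambda I - A)x\|\,\|x\|
&\ge |\<(\lambda I - A)x, j(x)\>| \\
&\ge \Re\<(\lambda I - A)x,j(x)\> \\
&= \lambda\|x\|^2 - \Re\<Ax,j(x)\> \\
&\ge \lambda\|x\|^2,
\end{align*}
and dividing by $\|x\|$ yields the dissipativity estimate $\|(\lambda I - A)x\|\ge \lambda\|x\|$.

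For the $(\Rightarrow)$ direction, the strategy is to build $j(x)$ from a Hahn--Banach construction combined with weak-$*$ compactness. For $x\in D(A)$ with $x\ne 0$ and each $\lambda>0$, I would invoke Hahn--Banach to produce a norming functional $z_\lambda'\in X'$ with $\|z_\lambda'\|=1$ and $\<(\lambda I - A)x, z_\lambda'\>=\|(\lambda I - A)x\|$. Combining with dissipativity,
\begin{align*}
\lambda\|x\| &\le \|(\lambda I - A)x\| \\
&= \lambda\Re\<x,z_\lambda'\> - \Re\<Ax,z_\lambda'\>,
\end{align*}
so that $\Re\<Ax,z_\lambda'\>\le \lambda(\Re\<x,z_\lambda'\>-\|x\|)\le 0$ for every $\lambda>0$, and rearranging also shows $\Re\<x,z_\lambda'\>\to\|x\|$ as $\lambda\to\infty$. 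By Banach--Alaoglu I would extract a weak-$*$ cluster point $z'$ of $\{z_\lambda'\}$ with $\|z'\|\le 1$, so that $\<x,z'\>=\|x\|$ (which forces $\|z'\|=1$) and $\Re\<Ax,z'\>\le 0$. Setting $j(x):=\|x\|\, z'$ then produces the required functional, with $\<x,j(x)\>=\|x\|^2=\|j(x)\|^2$ and $\Re\<Ax,j(x)\>\le 0$; the trivial case $x=0$ is handled by taking $j(0)=0$.

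The main obstacle I anticipate is the weak-$*$ compactness step: the unit ball of $X'$ is only weak-$*$ compact in general, not sequentially so, hence the limit must be taken along a subnet rather than a subsequence (for separable $X$ a genuine subsequence suffices). Apart from that, the argument is just careful bookkeeping of the dissipativity inequality tested against the Hahn--Banach norming functional.
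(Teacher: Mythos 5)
Your argument is correct and coincides with the standard Lumer--Phillips proof: the paper does not prove this proposition itself but cites it from Engel--Nagel, where the argument is exactly your construction --- the easy direction by testing $(\lambda I-A)x$ against $j(x)$, and the converse by taking norm-one norming functionals $z_\lambda'$ for $(\lambda I-A)x$, deducing $\Re\langle Ax,z_\lambda'\rangle\le 0$ and $\Re\langle x,z_\lambda'\rangle\to\|x\|$, and passing to a weak-$*$ cluster point via Banach--Alaoglu. No gaps; your caveat about subnets rather than subsequences is the right one.
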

\begin{Thm}
  For a densely defined, dissipative operator $(A,D(A))$ on a Banach space $X$ the following statements are equivalent.\\
(a) The closure $\overline{A}$ of $A$ generates a contraction semigroup.\\
(b) $Im(\lambda I - A)$ is dense in $X$ for some (hence all) $\lambda>0$.
\end{Thm}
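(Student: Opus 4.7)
The statement is a standard form of the Lumer--Phillips generation theorem, and my plan is to reduce it to the Hille--Yosida theorem (characterizing generators of contraction semigroups by the conditions $(0,\infty)\subset\rho(\overline{A})$ together with $\|R(\lambda,\overline{A})\|\le 1/\lambda$), which I would use as a black box. As a preliminary remark, a densely defined dissipative operator is automatically closable: from $\|(I-A)x\|\ge \|x\|$ one sees that the graph cannot contain any pair $(0,y)$ with $y\ne 0$. Hence $\overline{A}$ exists, and by taking graph-norm limits the dissipative estimate $\|(\lambda I-A)x\|\ge \lambda\|x\|$ immediately extends to $\overline{A}$ on $D(\overline{A})$.

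For the direction (a)$\Rightarrow$(b), if $\overline{A}$ generates a contraction semigroup then Hille--Yosida gives that $\lambda I-\overline{A}:D(\overline{A})\to X$ is bijective for every $\lambda>0$. Since by definition of closure the graph of $A$ is dense in that of $\overline{A}$, every $y=(\lambda I-\overline{A})x$ can be approximated in the $X$-norm by elements of the form $(\lambda I-A)x_n$ with $x_n\in D(A)$, so $\mathrm{Im}(\lambda I-A)$ is dense in $X$ for every $\lambda>0$, in particular for some $\lambda>0$.

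For the harder direction (b)$\Rightarrow$(a), I would proceed in three steps. First, dissipativity of $\overline{A}$ gives injectivity of $\lambda I-\overline{A}$ and shows that its inverse, defined on its image, is Lipschitz with constant $1/\lambda$. Second, combining this Lipschitz estimate with closedness of $\overline{A}$ one checks that $\mathrm{Im}(\lambda I-\overline{A})$ is closed in $X$; since $\mathrm{Im}(\lambda I-A)\subset \mathrm{Im}(\lambda I-\overline{A})$ and the former is dense by hypothesis, closedness forces $\mathrm{Im}(\lambda I-\overline{A})=X$. Thus $\lambda\in\rho(\overline{A})$ with $\|R(\lambda,\overline{A})\|\le 1/\lambda$. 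Third, promote this from one $\lambda_0>0$ to every $\lambda>0$ by the standard openness/connectedness argument: the set $\Omega:=\rho(\overline{A})\cap(0,\infty)$ is open in $(0,\infty)$ because resolvent sets are open, and a Neumann-series perturbation exploiting the uniform bound $\|R(\mu,\overline{A})\|\le 1/\mu$ on $\Omega$ shows that $\Omega$ is also closed in $(0,\infty)$. Connectedness of $(0,\infty)$ then upgrades $\Omega\ne\emptyset$ to $\Omega=(0,\infty)$, and Hille--Yosida delivers (a).

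The step I expect to be trickiest is the closed-range argument in (b)$\Rightarrow$(a): one must carefully transfer the dissipative estimate from $A$ to $\overline{A}$ and combine it with the closedness of $\overline{A}$ without conflating the two domains, so that surjectivity can be deduced from mere density of the image of $\lambda I - A$. The passage from ``some $\lambda$'' to ``all $\lambda$'' is the next most delicate point, because a priori the quantitative resolvent bound is only available on $\Omega$ itself, so the Neumann-series perturbation must be arranged to keep successive $\lambda$'s inside $\Omega$.
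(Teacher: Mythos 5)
The paper itself gives no proof of this statement: it is recalled in the appendix as a classical result (the Lumer--Phillips theorem) with a pointer to \cite{Engel1999}, so the only comparison available is with the standard textbook argument, and your proposal is exactly that argument. The reduction to Hille--Yosida, the direction (a)$\Rightarrow$(b) via bijectivity of $\lambda I-\overline{A}$ and graph-density of $A$ in $\overline{A}$, and the direction (b)$\Rightarrow$(a) via transferring the dissipative estimate to $\overline{A}$, the closed-range argument upgrading density of $\mathrm{Im}(\lambda_0 I-A)$ to surjectivity of $\lambda_0 I-\overline{A}$, and the open--closed connectedness argument in $(0,\infty)$ (where the bound $\|R(\mu,\overline{A})\|\le 1/\mu$ comes for free from dissipativity of $\overline{A}$ at every $\mu\in\rho(\overline{A})\cap(0,\infty)$) are all correct and are essentially how the cited source proceeds.

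The one step whose justification does not hold up as written is your preliminary remark that dissipativity alone makes $A$ closable. If $x_n\to 0$ and $Ax_n\to y$, the estimate $\|(\lambda I-A)x_n\|\ge\lambda\|x_n\|$ only yields $\|y\|\ge 0$ in the limit, which is vacuous, so ``the graph closure cannot contain $(0,y)$ with $y\ne 0$'' does not follow in the way you assert. The correct argument uses the density of $D(A)$: for fixed $w\in D(A)$ apply dissipativity to $\lambda x_n+w\in D(A)$ and let $n\to\infty$ to get $\|w-y-\lambda^{-1}Aw\|\ge\|w\|$, then let $\lambda\to\infty$ to obtain $\|w-y\|\ge\|w\|$ for all $w\in D(A)$, and finally let $w\to y$ using density to conclude $y=0$. (Alternatively, since the statement already speaks of $\overline{A}$, you could treat closability as part of the hypotheses; but as a step of your proof, the one-line justification is a gap, though an easily repaired and well-known one.) The rest of the proposal, including the two points you flag as delicate, is sound.
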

\begin{Coro}\label{semigroup}
  Let $(A,D(A))$ be a dissipative operator on a reflexive Banach space such that $\lambda I-A$ is surjective for some $\lambda > 0$. Then $A$ is densely defined and
generates a contraction semigroup.
\end{Coro}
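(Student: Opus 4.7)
The plan is to verify the hypotheses of the preceding theorem: that $A$ is dissipative (given) and densely defined (to be shown), and that the range of $\lambda I - A$ is dense for some $\lambda > 0$ (automatic from surjectivity). The real content is therefore density of $D(A)$, for which reflexivity enters crucially. I would proceed in three stages: first, obtain an everywhere-defined bounded resolvent $R(\lambda) = (\lambda I - A)^{-1}$ for all $\lambda > 0$, together with closedness of $A$; second, use reflexivity and weak closedness of the graph to prove $D(A)$ is dense; third, invoke the preceding theorem.

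For the first stage, dissipativity gives $\|(\lambda_0 I - A)x\| \ge \lambda_0 \|x\|$ for every $x \in D(A)$, where $\lambda_0$ is the value for which surjectivity is assumed, so $R(\lambda_0) := (\lambda_0 I - A)^{-1}$ is defined on all of $X$ with $\|R(\lambda_0)\| \le 1/\lambda_0$. Boundedness of $R(\lambda_0)$ makes $A = \lambda_0 I - R(\lambda_0)^{-1}$ closed. A Neumann-series expansion applied to the factorization $\lambda I - A = (I + (\lambda - \lambda_0) R(\lambda_0))(\lambda_0 I - A)$ extends surjectivity, and the resolvent bound $\|R(\lambda)\| \le 1/\lambda$, from a neighborhood of $\lambda_0$ to every $\lambda > 0$ by iteration.

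The key step is density. Fix $x \in X$; the family $\{\mu R(\mu) x\}_{\mu > 0}$ is bounded by $\|x\|$, so by reflexivity any sequence $\mu_n \to \infty$ admits a weakly convergent subsequence $\mu_{n_k} R(\mu_{n_k}) x \rightharpoonup y$. From the identity $A R(\mu) x = \mu R(\mu) x - x$ and the bound $\|R(\mu) x\| \le \|x\|/\mu \to 0$, one obtains $R(\mu_{n_k}) x \to 0$ in norm while $A R(\mu_{n_k}) x \rightharpoonup y - x$. Because $A$ is closed and linear, its graph is a norm-closed convex subspace of the reflexive space $X \times X$, hence also weakly closed; this forces $A \cdot 0 = y - x$, so $y = x$. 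Thus every weak cluster point of $\mu R(\mu) x$ equals $x$, so $\mu R(\mu) x \rightharpoonup x$, placing $x$ in the weak closure and hence, by convexity of $D(A)$, in the norm closure of $D(A)$. As $x$ was arbitrary, $D(A)$ is dense.

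With $D(A)$ dense, the preceding theorem applies and $\overline{A} = A$ generates a contraction semigroup on $X$. The main obstacle is the identification $y = x$ in the density step; both the extraction of a weakly convergent subsequence and the weak closedness of the graph rely essentially on reflexivity, so the reflexivity hypothesis cannot be dropped.
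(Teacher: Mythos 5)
Your proof is correct, and it is precisely the standard argument for this corollary (uniform resolvent bound and Neumann-series extension of surjectivity to all $\lambda>0$, then $\mu R(\mu)x\rightharpoonup x$ via weak compactness plus weak closedness of the graph), which the paper does not prove at all but simply quotes from the cited reference \cite{Engel1999}, where this same proof appears. One small attribution point: the weak closedness of the closed graph and the step from weak closure to norm closure of $D(A)$ follow from Mazur's theorem in any Banach space; reflexivity is genuinely needed only to extract the weakly convergent subsequence from the bounded family $\{\mu R(\mu)x\}$.
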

\begin{Thm}
  Let $(A,D(A))$ be the generator of a strongly continuous semigroup
  $(T(t))_{t\ge 0}$ on a Banach space $X$
  satisfying $\|T(t)\|\le Me^{\omega t}$ for all $t \ge 0$
  and some $\omega\in\R$, $M\ge 1$. If $B\in L(X)$, then
  $C := A + B$ with $D(C) := D(A)$
  generates a strongly continuous semigroup $(S(t))_{t\ge 0}$ satisfying
  $\|S(t)\|\le Me^{(\omega+M\|B\|)t}$ for all $t \ge 0$.
\end{Thm}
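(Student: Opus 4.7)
The plan is to construct $(S(t))_{t\ge 0}$ directly as the Dyson--Phillips series
\begin{align*}
S(t):=\sum_{n=0}^\infty S_n(t),\qquad S_0(t):=T(t),\qquad S_{n+1}(t)x:=\int_0^t T(t-s)BS_n(s)x\,ds,
\end{align*}
and then to identify its generator with $A+B$ on $D(A)$. The motivation is the variation-of-constants formula: any semigroup whose generator extends $A+B$ on $D(A)$ must, upon differentiating $s\mapsto T(t-s)S(s)x$ for $x\in D(A)$, satisfy the Volterra equation $S(t)x=T(t)x+\int_0^t T(t-s)BS(s)x\,ds$, and this equation is solved by successive approximation, giving exactly the above series.

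First I would prove by induction the uniform-in-$n$ bound $\|S_n(t)\|\le M^{n+1}\|B\|^n e^{\omega t}t^n/n!$. The base case is the hypothesis on $T$; the inductive step reduces to $\int_0^t Me^{\omega(t-s)}\|B\|\cdot M^{n+1}\|B\|^n e^{\omega s}s^n/n!\,ds$, which evaluates to the next term. Summing then yields
\begin{align*}
\|S(t)\|\le Me^{\omega t}\sum_{n=0}^\infty\frac{(M\|B\|t)^n}{n!}=Me^{(\omega+M\|B\|)t},
\end{align*}
and uniform-on-compacts convergence of the series in operator norm, together with strong continuity of each $S_n(\cdot)x$, gives strong continuity of $S(\cdot)x$ on $[0,\infty)$ with $S(0)=I$.

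The semigroup property $S(t+s)=S(t)S(s)$ I would deduce by noting that both sides, viewed as functions of $t$ with $s$ fixed, solve the same Volterra equation $U(t)=T(t)S(s)+\int_0^t T(t-r)BU(r)\,dr$ with initial value $U(0)=S(s)$; uniqueness follows from a Gronwall estimate using the a priori exponential bound just established.

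The step I expect to be the real obstacle is identifying the generator $(C,D(C))$ of $(S(t))$ with $A+B$ on \emph{exactly} $D(A)$. For $x\in D(A)$, differentiating the integral equation at $t=0$ gives $\lim_{t\to 0^+}(S(t)x-x)/t=Ax+Bx$, so $C$ extends $A+B|_{D(A)}$. For the converse inclusion $D(C)\subset D(A)$, given $x\in D(C)$ I would rewrite
\begin{align*}
\frac{T(t)x-x}{t}=\frac{S(t)x-x}{t}-\frac{1}{t}\int_0^t T(t-r)BS(r)x\,dr
\end{align*}
and let $t\to 0^+$: strong continuity of $T$ and boundedness of $B$ force the integral average to converge to $Bx$, so the left side converges, hence $x\in D(A)$ and $D(C)=D(A)$. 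The delicate point is justifying the limit exchange in the integral, which is exactly where strong continuity of $T$ and $B\in L(X)$ (ensuring $r\mapsto BS(r)x$ is continuous) come in together.
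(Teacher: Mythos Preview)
Your argument is correct and is essentially the standard Dyson--Phillips construction, but there is nothing in the paper to compare it against: this theorem appears in the Appendix as a quoted result from semigroup theory (with a reference to \cite{Engel1999}) and is stated without proof. The paper simply invokes it as background for the bounded-perturbation step in the proof of Theorem~\ref{Thm31}. Your write-up matches the textbook treatment in the cited reference, so in that sense you have reproduced what the paper defers to the literature.
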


In the end, we write two useful inequality: for any $n_1<n_2<n_3$,
\begin{align}\label{varepsilon_inequality}
\<v\>^{n_2}\le \varepsilon\<v\>^{n_3}+C_{n_1,n_2,n_3}\varepsilon^{-\frac{n_2-n_1}{n_3-n_2}}\<v\>^{n_1}.
\end{align}For $\rho>0$, $\delta\in\R$, $\alpha>-d$, $\beta\in\R$, we have
\begin{align}\label{equivalent_decay}
	\int_{\Rd}|v|^\alpha\<v\>^\beta\<v+u\>^\delta e^{-\rho|v+u|^2}\,dv\approx \<u\>^{\alpha+\beta},
\end{align}where constants may depend on the parameters. The first one is a version of Young's inequality while the second is lemma 2.5 in \cite{Alexandre2012}.

\bibliographystyle{plainurl}
\bibliography{1.bib}
% \input{noncutoff_semigroup.bbl}
%  \printbibliography
\end{document}